\newdefinition{remark}{Remark}
\newdefinition{proposition}{Proposition}
\newdefinition{proof}{Proof}
\newdefinition{algorithm}{Algorithm}
\newcommand{\du}{\, \mathrm{d}}
\newcommand{\myremarkend}{~\hfill$\spadesuit\/$}
\journal{...}
\begin{document}

\begin{frontmatter}

\title{A Sharp-Interface Active Penalty Method for the Incompressible Navier-Stokes Equations.}

\author{D. Shirokoff}
\ead{david.shirokoff@mail.mcgill.ca}

\author{J.-C. Nave}
\ead{jcnave@math.mcgill.ca}

\address{Department of Mathematics and Statistics, McGill University,
         Montreal, QC H3A0B9, CAN}

\begin{abstract}
	The volume penalty method provides a simple, efficient approach for solving the incompressible Navier-Stokes equations in domains with boundaries or in the presence of moving objects.  Despite the simplicity, the method is typically limited to first order spatial accuracy.  We demonstrate that one may achieve high order accuracy by introducing an \emph{active penalty term}.  One key difference from other works is that we use a sharp, unregularized mask function.  We discuss how to construct the active penalty term, and provide numerical examples, in dimensions one and two.  We demonstrate second and third order convergence for the heat equation, and second order convergence for the Navier-Stokes equations. In addition, we show that modifying the penalty term does not significantly alter the time step restriction from that of the conventional penalty method.
\end{abstract}

\begin{keyword}
	Active penalty method \sep Sharp mask function \sep
Immersed boundary \sep Incompressible flow \sep Navier-Stokes \sep Heat equation
\end{keyword}

\end{frontmatter}

\section{Introduction}

There are many popular methods for numerically solving the incompressible Navier-Stokes equations in complex geometries.  For instance, the immersed boundary method \cite{Peskin2000}, the immersed interface method \cite{LeKhooPeraire2006} and the ghost fluid method  \cite{FedkivAslamMerrimanOsher1999} are popular since they allow one to use a regular grid with an immersed domain boundary.  Other efficient methods for the Navier-Stokes or heat equation include \cite{GibouChenNguyenBanerjee2007,GibouFedkiw2005,NgMinGibou2009}.  These methods not only use a regular grid, but also utilize level set functions to ensure a sharp interface.  In all cases, the regular grid and level set formulation alleviates many of the numerical difficulties introduced by curved or moving boundaries.  In this paper, we focus on the volume penalty method \cite{Angot2005,Angot2010,Bruneau2000,BruneauFabrie1996,KhadraAngotParneixCaltagirone2000}, which loosely fits into the same class of methods.

As a result of their simplicity, penalty methods have been used in a wide variety of problems including electromagnetism, magnetohydrodynamics \cite{MoralesLeroyBosSchneider2012}, shape optimization \cite{ChantalatBruneauGalusinskiIollo2009}, fluid-solid interaction problems \cite{CoquerelleCottet2008,KadochKolomenskiyAngotSchneider2012} and even simpler problems such as the heat equation or Poisson equation \cite{RamiereAngotBelliard2007}.  In the context of fluids, they provide a simple means for solving the incompressible Navier-Stokes equations in domains with boundaries.  The approach relies on replacing the often difficult to implement Dirichlet fluid boundary conditions, with a simpler to implement volumetric forcing term in the advection equation.  

Despite the simplicity, the penalty method suffers from i) poor convergence in the penalty parameter, thereby restricting the accuracy of numerical methods and, ii) a lack of regularity in the velocity field which reduces the advantages of spectral methods.  For example, solutions to the penalized equations have a discontinuous second derivative which limits the decay rate of the Fourier coefficients, as well as the ability to spectrally compute derivatives.  Despite the lack of smoothness, stable and low order spectral methods have been successfully used to solve the penalized fluid equations \cite{KadochKolomenskiyAngotSchneider2012,KolomenskiySchneider2009}. 

The focus of our paper is to introduce a systematic method for improving the accuracy of penalty methods.  Current methods which improve accuracy rely on introducing a subgrid numerical construct in the vicinity of the domain boundary \cite{SarthouVincentCaltagirone2011,SarthouVincentCaltagironeAngot2008}.  Such approaches, however, are restrictive if one wishes to eventually use high order Fourier methods.  One distinct difference with our approach is that we alter the equations at the continuous level to improve the analytic convergence rate of the penalized problem to the original unpenalized problem.  The improved analytic convergence rate then allows for higher order numerical schemes.

We first introduce the original volume penalty method, followed by an introduction to the improved \emph{active penalty method}.  We then explicitly show how to analytically construct the new penalty term.  Following the construction, we then examine a model equation to demonstrate how the active penalization improves the convergence rate for the Poisson equation.  

After discussing the improved convergence, we focus on numerical details.  First, we examine the stability of the new active penalty term, and show that it does not introduce additional numerical stiffness. We then provide numerical examples for the heat equation, in dimensions one and two, showing second and third order schemes. Lastly, we outline how to handle the divergence constraint for the Navier-Stokes equations and provide numerical examples showing second order convergence (in $L^{\infty}$) in the velocity field and first order in the pressure.

\section{Navier-Stokes and volume penalty equations} \label{Section_NavierStokes}

The aim of our work is to examine the behavior of a fluid in the vicinity of a solid or a porous medium.  For instance, two examples include the motion of a fluid in a bounded domain with hard walls, or the motion around an immersed solid body such as the one shown in figure \ref{Obstacle}.  In our case, we consider dimensions $D = 2, 3$ and let $\Omega_p \subset \mathbbm{R}^D$ denote the physical fluid domain.  For our purposes, $\Omega_p$ is an open set with $C^2$ boundary $\Gamma = \partial \Omega_p$.  

\subsection{Incompressible Navier-Stokes equations}
Through the conservation of mass and momentum, the incompressible Navier-Stokes equations govern the flow of an incompressible fluid for $\mathbf{x} \in \Omega_p$
\begin{eqnarray} \label{NS_Eq1}
	\partial_t \mathbf{u} + \mathbf{u}\cdot\nabla \mathbf{u} &=& -\nabla p + \mu \Delta \mathbf{u} + \mathbf{f} \\ \label{NS_Eq2}
	\nabla \cdot \mathbf{u} &=& 0. 
\end{eqnarray}
Here $\mathbf{u}(\mathbf{x}, t)$ is the velocity vector field, $p(\mathbf{x}, t)$ is the pressure, $\mu > 0$ is the kinetic viscosity, and $\mathbf{f}(\mathbf{x}, t)$ is an external forcing such as gravity.  

To supplement the bulk equations (\ref{NS_Eq1})--(\ref{NS_Eq2}), the fluid velocity also satisfies prescribed boundary conditions
\begin{eqnarray} \label{NS_BC}
	\mathbf{u} &=& \mathbf{g} \hspace{10mm} \mbox{for } \mathbf{x} \in \Gamma \\ \label{NS_BC_Constraint}
	\int_{\Gamma} \mathbf{g}\cdot \mathbf{n} \du A &=& 0
\end{eqnarray}
Here $\mathbf{n}$ is an outward pointing normal, while equation (\ref{NS_BC_Constraint}) represents a consistency condition on the boundary data.  Although we allow $\mathbf{g} = \mathbf{g}(\mathbf{x}, t)$ to be a function of both space and time, the case of $\mathbf{g} = 0$ represents the practical condition of a no-slip and no-flux boundary condition.  Together, equations (\ref{NS_Eq1})--(\ref{NS_Eq2}) with boundary data (\ref{NS_BC}) describe the evolution of an initial, divergence-free velocity field $\mathbf{u}(\mathbf{x}, 0) = \mathbf{u}_0(\mathbf{x})$.  

\subsection{Volume penalty equations}
Domains with curved boundaries $\Gamma$ present several challenges to the numerical solution of equations (\ref{NS_Eq1})--(\ref{NS_Eq2}).  For example, curved boundaries or immersed objects limit the use of Fourier methods since solutions are not periodic, or easily extended to periodic functions.  One simple solution to handle complicated or moving boundaries is through the use of a volume penalty term in the Navier-Stokes equations.  In such a case, one removes the Navier-Stokes boundary condition, and instead adds a drag term to the momentum equation.  

To introduce the penalized equation, we first denote $\Omega_s$ as the solid domain of the obstacle or wall.  Here the obstacle region $\Omega_s = \overline{\Omega}_s$ is a closed set which shares the same boundary as the fluid,  $\partial \Omega_s = \Gamma$.  The penalized equations are then defined on a computational domain $\Omega$ which is the union of the physical and solid domains $\Omega = \Omega_p \bigcup \Omega_s$.  In our case we take $\Omega$ to be a rectangular domain with periodic boundary conditions, i.e. $\Omega = \mathbbm{T}^D$ where $\mathbbm{T}^D$ is the D-dimensional torus.  

\begin{figure}[htb!]
	\centering
    \includegraphics[width = 0.9\textwidth]{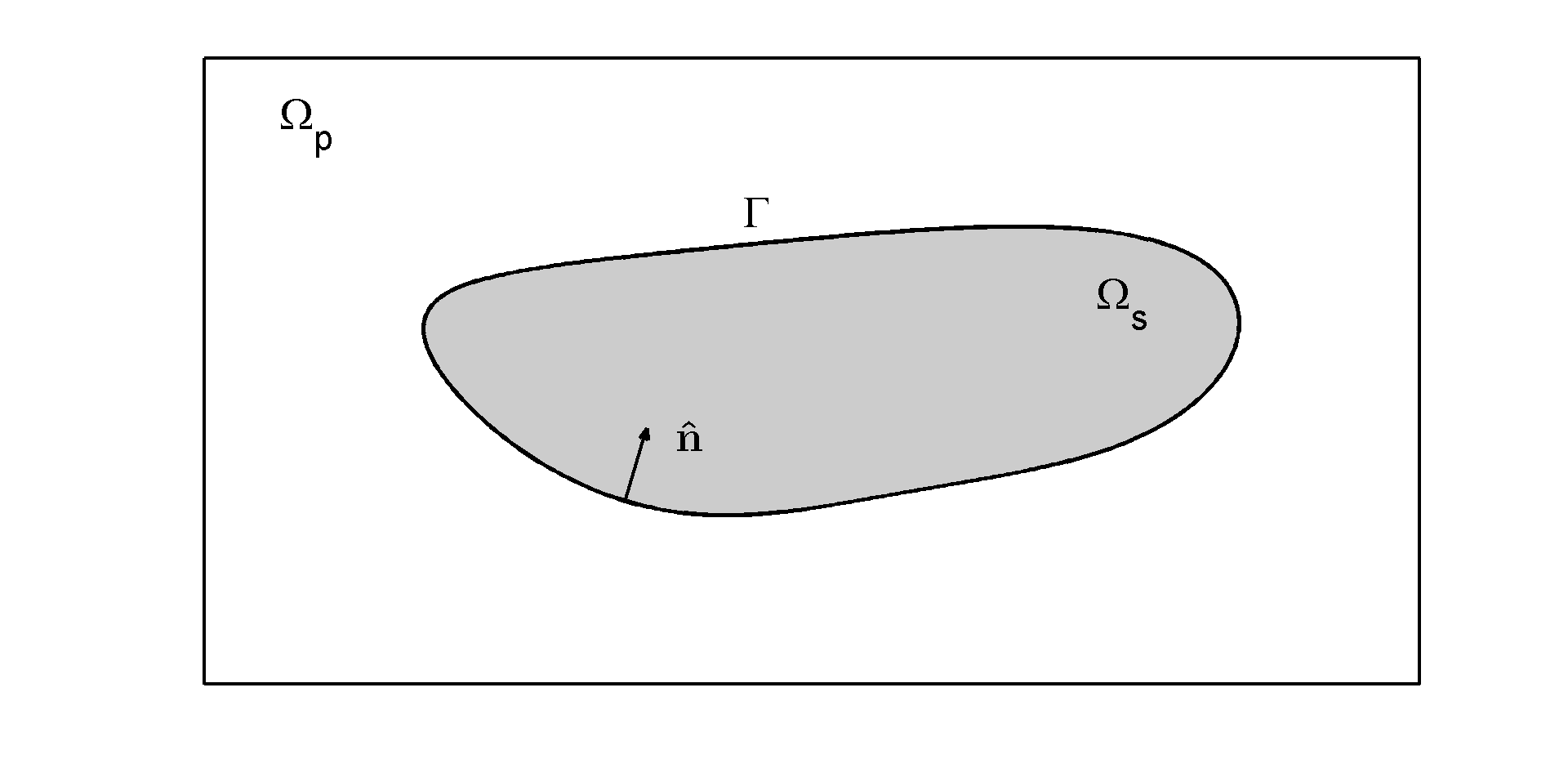} \\
    \caption{Illustration of the physical fluid ($\Omega_p$) and solid obstacle ($\Omega_s$) domains.}
 \label{Obstacle}
\end{figure}

For a stationary obstacle with a $\mathbf{g} = 0$ boundary condition, the volume penalty equations (see \cite{AngotBruneauFabrie1999,AngotCaltagiron1990,ArquisCaltagirone1984}) are
\begin{eqnarray} \label{Penalty_Eq1}
	\partial_t \mathbf{u}_{\eta} + \mathbf{u}_{\eta}\cdot\nabla \mathbf{u}_{\eta} &=& -\nabla p_{\eta} + \mu \Delta \mathbf{u}_{\eta} + \mathbf{f} - \eta^{-1}\chi_s \; \mathbf{u}_{\eta} \hspace{10mm} \mathbf{x} \in \Omega \\ \label{Penalty_Eq2}
	\nabla \cdot \mathbf{u}_{\eta} &=& 0. 
\end{eqnarray}
Here $\eta$ is a small parameter, and $\chi_s(\mathbf{x})$ is the characteristic function on $\Omega_s$, namely 
\begin{eqnarray} \label{CharFunction}
	\chi_s(\mathbf{x}) = \begin{cases}
		0 & \text{for } \mathbf{x} \in \Omega \setminus \Omega_s \\
		1 & \text{for } \mathbf{x} \in \Omega_s
	\end{cases}.
\end{eqnarray}
In the limit $\eta \rightarrow 0$, the drag term in equation (\ref{Penalty_Eq1}) becomes large and tends to slow the fluid inside $\Omega_s$.  Rigorous convergence results by Angot et al. \cite{AngotBruneauFabrie1999}, and Carbou and Fabrie \cite{CarbouFabrie2003} show that the penalized velocity $\mathbf{u}_{\eta}$ converges to the solution of the Navier-Stokes equations $\mathbf{u}$ with an error rate of $O(\eta^{1/2})$ in the $L^2(\Omega_p)$ norm.

\subsection{Improved volume penalty equations}

Although the volume penalty equations do converge to Navier-Stokes as $\eta \rightarrow 0$, the convergence rate is slow and therefore may limit the accuracy of resulting numerical schemes.  For example, let $\mathbf{u}_{\eta, num}$ denote a numerical solution for the penalized equations. One is then interested in quantifying the numerical error for $\mathbf{u}_{\eta, num}$ compared to $\mathbf{u}$, the solution to the original Navier-Stokes problem (\ref{NS_Eq1})--(\ref{NS_Eq2}).  Using the triangle inequality\footnote{Here $||\cdot||_2$ is any appropriate numerical $L^2(\Omega_p)$ norm.}, the error can be controlled by
\begin{eqnarray}
	||\mathbf{u} - \mathbf{u}_{\eta, num}||_2 \leq ||\mathbf{u} - \mathbf{u}_{\eta}||_2 + ||\mathbf{u}_{\eta} - \mathbf{u}_{\eta, num}||_2.
\end{eqnarray}
Rigorous convergence results then bound the first term as 
$||\mathbf{u} - \mathbf{u}_{\eta}||_{2} \sim \eta^{1/2}$, while $||\mathbf{u}_{\eta} - \mathbf{u}_{\eta, num}||_2$ depends on the numerical details and order of the scheme.  Finally, we note that the addition of the penalty term introduces time scales of $O(\eta)$ and length scales of $O(\eta^{1/2})$ into the solution $\mathbf{u}_{\eta}$.  To appropriately resolve the boundary layers in the penalty equations  (\ref{Penalty_Eq1})--(\ref{Penalty_Eq2}), one then has a grid spacing of $\Delta x \sim \sqrt{\eta}$ leading to a first order bound
\begin{eqnarray}
	||\mathbf{u} - \mathbf{u}_{\eta, num}||_2 \leq O(\Delta x).
\end{eqnarray}
In light of the above observations, a high order penalty method must either increase the boundary layer width $O(\sqrt{\eta})$, or improve the analytic convergence rate in the penalty parameter.  We adopt the second approach, and outline how equation (\ref{Penalty_Eq1}) can be modified to better approximate the original Navier-Stokes problem (\ref{NS_Eq1})--(\ref{NS_Eq2}).  Furthermore, we note that when modifying the penalty term, it is important to avoid the introduction of additional length or time scales which would hinder the development of high order numerical schemes.  To improve the penalized equations, we exploit the fact that $\mathbf{u}$ satisfies the boundary conditions on $\Gamma$, and does not represent a physical flow inside $\Omega_s$.  Specifically, we modify the penalty term so that the flow tracks an extension function $\mathbf{\tilde{g}}$ defined on $\mathbf{x} \in \Omega_s$.  In such a case, the volume penalty equations take the form 
\begin{eqnarray} \label{ImprovedPenalty_Eq1}
	\partial_t \mathbf{u}_{\eta} + \mathbf{u}_{\eta}\cdot\nabla \mathbf{u}_{\eta} &=& -\nabla p_{\eta} + \mu \Delta \mathbf{u}_{\eta} + \mathbf{f} - \eta^{-1}\chi_s \; (\mathbf{u}_{\eta} - \mathbf{\tilde{g}})\hspace{4mm} \mathbf{x} \in \Omega \\ \label{ImprovedPenalty_Eq2}
	\nabla \cdot \mathbf{u}_{\eta} &=& 0 \hspace{6mm} \mathbf{x} \in \Omega_p. 
\end{eqnarray}

At this point, we only specify the divergence constraint within the physical domain and defer a more detailed description of the divergence constraint inside $\Omega_s$ for section \ref{Section_NumericsNSEq}.  The idea is to choose $\mathbf{\tilde{g}}$ to reduce the artificial fluid boundary layer generated by the penalized equations in the vicinity of $\Gamma$.  Specifically, the function $\mathbf{\tilde{g}}$ should be a smooth, at least $C^1$, extension of the prescribed boundary conditions.  The extension is constructed for each component of $\mathbf{\tilde{g}}$, and each component of $\mathbf{\tilde{g}}$ should be chosen to match $k$ derivatives of $\mathbf{u}$ in the direction normal to $\Gamma$. As a result, we prescribe the following general properties for $\mathbf{\tilde{g}}$ 
\begin{enumerate}
	\item [P1.] $\mathbf{\tilde{g}}$ is an extension of the prescribed boundary values: $\mathbf{\tilde{g}} = \mathbf{g}$ for $\mathbf{x} \in \Gamma$.
	\item [P2.] $\mathbf{\tilde{g}}$ has the same normal slope as $\mathbf{u}$: $(\mathbf{n}\cdot \nabla) u_i = (\mathbf{n} \cdot \nabla) \tilde{g}_i$. Here $u_i$ and $\tilde{g}_i$ for $i = 1\ldots D$ are the components of $\mathbf{u}$ and $\mathbf{\tilde{g}}$
	\item [P3.] For higher derivatives, $(\mathbf{n}\cdot \nabla)^k u_i |_{\Gamma_p} =  (\mathbf{n}\cdot \nabla)^k \tilde{g}_i$.  
\end{enumerate}
Since derivatives of $\mathbf{u}$ may be discontinuous across $\Gamma$, the notation $\Gamma_p$ denotes the limit of the derivative from the physical domain $\Omega_p$.

\section{Constructing the extension $\mathbf{\tilde{g}}$} \label{Section_Extension}
In this section we discuss one possible construction for the extension function $\mathbf{\tilde{g}}$.  The construction procedure is identical for each component $g_i$ of $\mathbf{\tilde{g}}$ for $i = 1\ldots D$. 
 
In our approach, we assume the domain $\Omega_p$ has a smooth boundary, at least $\Gamma \in C^2$. As a result, we omit a class of physically important domains such as rectangles.  The general idea is to match the normal derivatives of $\mathbf{\tilde{g}}$ to those of $\mathbf{u}$ on $\Gamma$.  With the appropriate boundary derivatives, we then let $\mathbf{\tilde{g}}$ decay to some constant value $\mathbf{G}$ over a length scale $l$.  In our construction, the maximum length scale $l$ is bounded by the minimum radius of curvature of the interface.  

\begin{enumerate}
\item [Step 1.]
First introduce a family of smooth, one-dimensional basis functions $B_j \in C^{k}$ with $0 \leq j \leq k$ such that
\begin{enumerate}
	\item [(i)] The functions $B_j$ form a basis for derivatives at $x = 0$
\begin{eqnarray} \label{KDelta}
	\frac{d^i}{dx^i} B_j(0) = \begin{cases}
		1 & \text{for } j = i \\
		0 & \text{for } j \neq i
	\end{cases}
\end{eqnarray}
	\item [(ii)] Each $B_j(x)$ has support on $[0, 1]$. Namely $B_j(x) = 0$ for $x < 0$ and $x > 1$.
\end{enumerate}
One can then use the functions $B_j(x)$ to construct a $C^k$ extension $\tilde{f}(x)$ of a one-dimensional function $f(x)$ on $x \geq 0$ as
\begin{eqnarray} \label{OneDExtension}
	\tilde{f}(x) = f(0) B_0(x) + f'(0) B_1(x) + \ldots + f^{(k)}(0) B_k(x).
\end{eqnarray}
Note that by construction, the function $\tilde{f}(x)$ matches k derivatives at $x = 0$ and vanishes for $x > 1$. The goal is now to modify the extension (\ref{OneDExtension}) to higher dimensions.

Although there are many different choices for $B_j(x)$, we now give an example of one such choice for matching $k = 2$ derivatives.  We do this by constructing $B_j(x)$ out of stretched copies of the smoothly decaying function
\begin{eqnarray}
	h(x) = \begin{cases}
		e^{1 - \frac{1}{1-x}} & \text{for } 0 \leq x < 1 \\
		0 & \text{for } x \geq 1
	\end{cases}.
\end{eqnarray}
Using $h(x)$, one can take the functions $B_0, B_1, B_2$ (figure \ref{Basisfunctions}) as the weighted sums
\begin{eqnarray}
	B_0(x) &=& 3 \; h(x) - 3 \;h(2x) + h(3x) \\
	B_1(x) &=& \frac{5}{2}\; h(x) - 4\; h(2x) + \frac{3}{2}\; h(3x) \\
	B_2(x) &=& -\frac{1}{2}\; h(x) + h(2x) - \frac{1}{2}\; h(3x).
\end{eqnarray}

\begin{figure}[htb!]
	\centering
    \includegraphics[width = \textwidth]{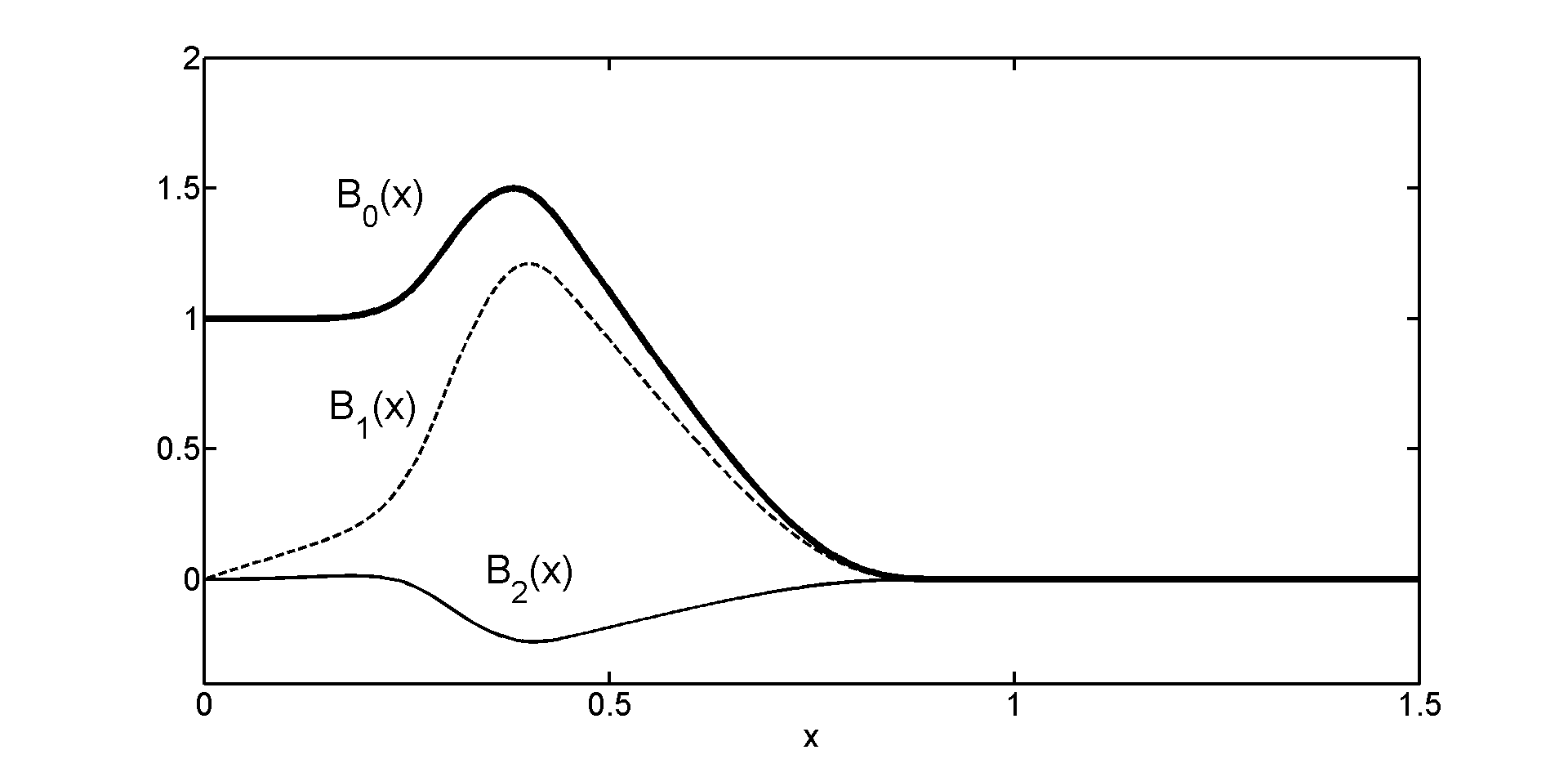} \\
    \caption{A plot of the $1D$ basis functions $B_0(x)$ (thick), $B_1(x)$ (dashed) and $B_2(x)$ (thin).}
 \label{Basisfunctions}
\end{figure}

\item [Step 2.] 
Construct a coordinate system inside the obstacle. The coordinate system should be orthogonal at the boundary, and only needs to extend a distance $l$ inside the domain $\Omega_s$.  

To construct the coordinates, we follow a standard approach \cite{Folland1995} shown in figure \ref{LocalCoordinates}.  Let $\boldsymbol \xi \in \Gamma$ denote the coordinates of the boundary $\Gamma$.  Any point $\mathbf{x}$ within a distance $l$ of the boundary may then be written as
\begin{eqnarray} \label{ForwardCoordinates}
	\mathbf{x} = \boldsymbol \xi + s \, \mathbf{n}(\boldsymbol \xi).
\end{eqnarray}
Here $s$ is the normal distance inside $\Omega_s$ from the boundary.  Within a small enough region, $s \leq l$, one may invert\footnote{The coordinates $\boldsymbol \xi(\mathbf{x})$ and $s(\mathbf{x})$ are both at least $C^1$ functions} equation (\ref{ForwardCoordinates}) to write $\boldsymbol \xi = \boldsymbol\xi(\mathbf{x})$ and $s = s(\mathbf{x})$.

\begin{figure}[htb!]
	\centering
    \includegraphics[width = 0.7\textwidth]{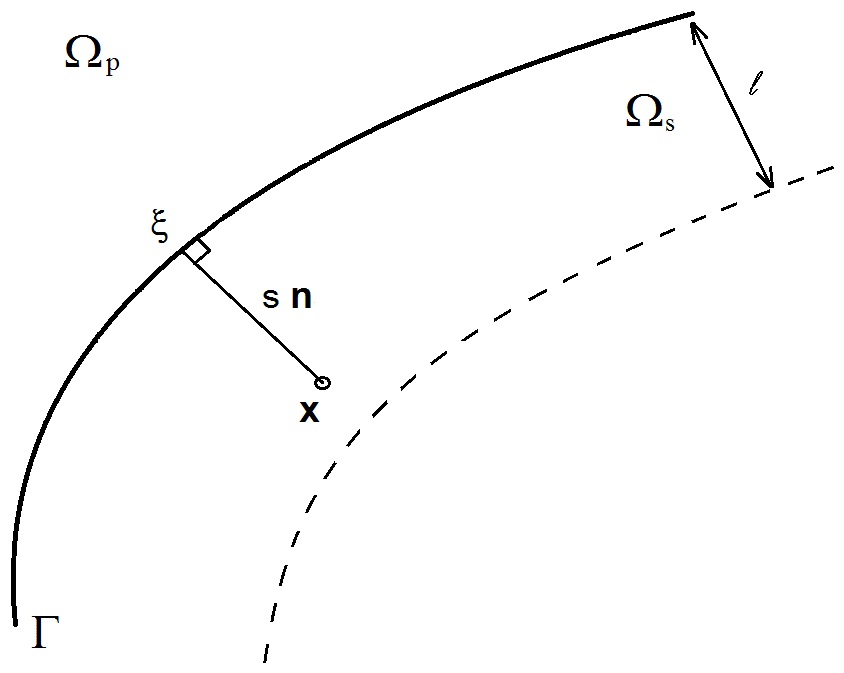} \\
    \caption{A local set of normal coordinates. Here $\boldsymbol\xi \in \Gamma$ is a point on the boundary, while $s$ is the distance in the normal direction. A coordinate inside a neighborhood of $\Omega_s$ has the form $\mathbf{x} = \boldsymbol \xi + s\, \mathbf{n}(\boldsymbol\xi)$.}
 \label{LocalCoordinates}
\end{figure}

\begin{remark}
	In cases where a level set function $\phi(\mathbf{x})$ describes the boundary $\Gamma = \{ \mathbf{x} \in \Omega \, | \, \phi(\mathbf{x}) = 0\}$, one may identify
	\begin{eqnarray}
		\mathbf{n} &=& \nabla \phi \\
		s &=& \phi(\mathbf{x}).
	\end{eqnarray}
Here we have assumed $|\nabla \phi| = 1$ and $\phi(\mathbf{x}) > 0$ represents the domain $\Omega_s$ while $\phi(\mathbf{x}) < 0$ corresponds to the domain $\Omega_p$.  
\myremarkend
\end{remark}

\item[Step 3.]
Construct the extension $\mathbf{\tilde{g}}$ using the functions $B_j(x)$ and the coordinates $(\boldsymbol \xi, s)$. 

For brevity, we introduce notation for the normal derivatives at the boundary $\Gamma$.
\begin{eqnarray}
	\mathbf{u}_n(\boldsymbol \xi) &=& (\mathbf{n}\cdot\nabla)\mathbf{u}|_{\Gamma} \\
	\mathbf{u}_{nn}(\boldsymbol \xi) &=& (\mathbf{n}\cdot\nabla)^2 \mathbf{u} |_{\Gamma}\\ \nonumber
	&\vdots& \\ 
	\mathbf{u}_{n_k}(\boldsymbol \xi) &=&  (\mathbf{n}\cdot\nabla)^k \mathbf{u} |_{\Gamma}
\end{eqnarray}
Again, we note that higher derivatives of $\mathbf{u}$ are discontinuous across the boundary. Therefore, $\mathbf{u}_{n_k}$ is evaluated as the limit approaching the boundary from the physical domain.  The extension is then
\begin{eqnarray} \nonumber
	\mathbf{\tilde{g}}(\mathbf{x}) &=& \big( \mathbf{g}(\boldsymbol \xi ) - \mathbf{G} \big)\,\, B_0\big( s \, l^{-1} \big) + l \,\, \mathbf{u}_n(\boldsymbol \xi ) \,\, B_1\big( s \, l^{-1} \big)  \\ \nonumber
&+& l^2 \,\, \mathbf{u}_{nn}(\boldsymbol \xi) \,\, B_2\big(s \, l^{-1} \big) + \ldots \\
&+& l^k \,\, \mathbf{u}_{n_k}(\boldsymbol \xi) \,\, B_k\big(s \, l^{-1} \big) + \mathbf{G}. \label{Extension}
\end{eqnarray}
Note that $\mathbf{\tilde{g}}$ decays to $\mathbf{G}$, i.e. $\mathbf{\tilde{g}} \rightarrow \mathbf{G}$, as $s \rightarrow l$.  Therefore $\mathbf{G}$ can be any time-dependent constant vector, however, for numerical purposes one should choose $\mathbf{G}$ close to the boundary average of $\mathbf{g}$
\begin{eqnarray}
	\mathbf{G} &=& A^{-1} \int_{\Gamma} \mathbf{g} \du A \\
	A &=& \int_{\Gamma} \du A.
\end{eqnarray}

\end{enumerate}

\begin{remark}
	Since values of $\mathbf{\tilde{g}}$ inside $\Omega_s$ depend on derivatives of $\mathbf{u}$ on the boundary, the function $\mathbf{\tilde{g}}$ described in (\ref{Extension}) depends linearly on $\mathbf{u}$.
\myremarkend
\end{remark}

\begin{remark}
	We can check that the construction (\ref{Extension}) satisfies the properties (P1)-(P3).  For $\mathbf{x} \in \Gamma$, we have $s = 0$, so that $\mathbf{\tilde{g}} = \mathbf{g}$, thereby satisfying (P1). To check higher derivatives, we first note that differentiating (\ref{ForwardCoordinates}) with respect to $s$ yields $\frac{\partial \mathbf{x}(\boldsymbol \xi, s)}{\partial s} = \mathbf{n}(\boldsymbol \xi)$.  As a result any function independent of $s$, $y(\boldsymbol \xi, s) = y(\boldsymbol \xi)$, has the property that
\begin{eqnarray} \label{DerivativeXi}
	(\mathbf{n}\cdot \nabla) y(\boldsymbol \xi) &=&  \sum_j \frac{\partial x_j}{\partial s} \frac{\partial y(\boldsymbol \xi)}{\partial x_j} \\
	&=& \frac{\partial y(\boldsymbol \xi)}{\partial s} \\
	&=& 0.
\end{eqnarray}
 Meanwhile, we also have 
\begin{eqnarray}
	(\mathbf{n}\cdot \nabla)^i B_j(s \, l^{-1}) |_{s = 0} &=& \Big(\frac{\partial}{\partial s} \Big)^i B_j(s \, l^{-1})  |_{s = 0}\\
\label{DerivativePhi}
	&=& l^{-i} \delta_{ij}
\end{eqnarray}
where $\delta_{ij}$ is the Kronecker delta.  Combining the two properties above, we have 
\begin{eqnarray}
	(\mathbf{n}\cdot \nabla)^i \mathbf{\tilde{g}} |_{\mathbf{x} \in \Gamma}&=& \mathbf{u}_{n_i}(\boldsymbol \xi).
\end{eqnarray}
Therefore, we recover properties (P2)-(P3).

\myremarkend
\end{remark}

\section{A Model equation} \label{Section_ModelEquation}
In this section we examine solutions to the steady-state heat equation to provide some explanation for how the extension function $\tilde{g}$ improves the analytic convergence rate of the penalized equations to the original problem.  In particular, we seek to quantify the error that results from the additional penalty forcing.  As a non-penalized problem, consider
\begin{eqnarray} \label{ModelEq}
		\partial_{xx} v &=& 0 \hspace{6mm} x \in [-1, 0] 
\end{eqnarray}
with boundary conditions: $v(-1) = 1$, $v(0) = 0$.  The solution is then $v(x) = -x$ for $x \in [-1, 0]$.

We note that solving explicit examples of the steady-state equations do not give general sharp convergence estimates, however, they do provide a rigorous lower bound on the convergence rate of the penalized equation to the exact non-penalized equation.
The equivalent one dimensional steady-state penalized problem is then
\begin{eqnarray} \label{ModelPenaltyEq}
		\partial_{xx} u &=& \eta^{-1}H(x) (u - \tilde{g}) \hspace{6mm} x \in [-1, \infty)
\end{eqnarray}
with boundary conditions $u(-1) = 1$, $u(\infty) = 0$.  Here $H(x)$ is the Heaviside function
\begin{eqnarray}
	H(x) = \begin{cases}
		0 & \text{for } x < 0 \\
		1 & \text{for } x \geq 0
	\end{cases}
\end{eqnarray}
We now examine the convergence of solutions $u \rightarrow v$ in the limit $\eta \rightarrow 0$ for different extensions $\tilde{g}$.

\begin{remark} \label{RmkDiscSecondDer}
	As a result of the discontinuous Heaviside function $H(x)$, the solution $u$ to equation (\ref{ModelPenaltyEq}) has one continuous derivative ($u \in C^1$). Higher derivatives are discontinuous across $x = 0$.  \myremarkend
\end{remark}
In light of remark (\ref{RmkDiscSecondDer}), we take $\tilde{g}$ to have derivatives matching $u$ from the physical domain $x = 0^-$ and not $x = 0^+$. 

\begin{proposition} \label{Proposition1}
	Suppose that $\tilde{g}$ is a bounded $C^{k+1}$ function that matches $k$ derivatives of $u$ at $x = 0^-$. Namely
	\begin{enumerate}
   	 \item $\tilde{g}(0) = 0$
	    \item $\tilde{g}'(0) = u'(0)$
   	 \item  $\tilde{g}^{(m)}(0) = u^{(m)}(0^-) = 0$ for $2 \leq m \leq k$.  
	\end{enumerate} 
	Then the solution $u$ converges to $v$ as
	\begin{eqnarray}
		\max_{x \in [-1, 0]} |u - v| &=& O(\eta^{(k+1)/2})
	\end{eqnarray}
\end{proposition}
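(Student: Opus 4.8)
The plan is to reduce the global error on $[-1,0]$ to a bound on the single interface value $u(0)$, and then to estimate $u(0)$ from the boundary-layer structure of the penalized equation on $[0,\infty)$. First I would note that on $[-1,0]$ the Heaviside factor vanishes, so (\ref{ModelPenaltyEq}) reduces to $u''=0$ and $u$ is affine there. Since $v(x)=-x$ is also affine and $u(-1)=v(-1)=1$, the difference $u-v$ is affine and vanishes at $x=-1$; hence it attains its maximum modulus at the interface, and
\[
\max_{x\in[-1,0]}|u-v| \;=\; |u(0)-v(0)| \;=\; |u(0)| .
\]
Thus the whole statement reduces to proving $|u(0)|=O(\eta^{(k+1)/2})$.

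Next I would isolate the boundary layer by setting $w:=u-\tilde g$ on $[0,\infty)$. Substituting into (\ref{ModelPenaltyEq}) gives the linear problem $w''-\eta^{-1}w=-\tilde g''$, with $w$ bounded (indeed $w\to0$, since $\tilde g$ decays to $G=0$). The $C^1$ matching of $u$ at the interface together with hypotheses (1)--(2) gives $w(0)=u(0)-\tilde g(0)=u(0)$ and, crucially, $w'(0)=u'(0)-\tilde g'(0)=0$: the first-order data is exactly cancelled by the extension, leaving a homogeneous Neumann condition. To extract $w(0)$ I would test the equation against the decaying homogeneous solution $\psi(x)=e^{-x/\sqrt\eta}$ of $\psi''-\eta^{-1}\psi=0$. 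The Lagrange (Green's) identity
\[
\int_0^\infty\!\big(\psi w''-w\psi''\big)\,\du x \;=\; \big[\psi w'-w\psi'\big]_0^\infty ,
\]
together with $w'(0)=0$, $\psi(0)=1$, $\psi'(0)=-\eta^{-1/2}$ and the vanishing of the boundary term at infinity, collapses to the explicit representation
\[
w(0) \;=\; \sqrt{\eta}\int_0^\infty e^{-x/\sqrt{\eta}}\,\tilde g''(x)\,\du x .
\]

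Then I would read off the rate by repeated integration by parts against $e^{-x/\sqrt\eta}$. Each integration by parts produces one factor of $\sqrt\eta$ and a boundary term proportional to $\tilde g^{(m)}(0)$; by hypothesis (3) these boundary terms vanish for $2\le m\le k$, so the process may be iterated $k-1$ times with no surviving boundary contribution, leaving
\[
w(0) \;=\; \eta^{k/2}\int_0^\infty e^{-x/\sqrt{\eta}}\,\tilde g^{(k+1)}(x)\,\du x .
\]
Since $\tilde g\in C^{k+1}$ with $\tilde g^{(k+1)}$ bounded, the remaining integral is at most $\|\tilde g^{(k+1)}\|_\infty\sqrt\eta$, and therefore $|u(0)|=|w(0)|\le \eta^{(k+1)/2}\|\tilde g^{(k+1)}\|_\infty=O(\eta^{(k+1)/2})$, as claimed.

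The one genuine subtlety---and the step I expect to be the main obstacle---is that $\tilde g$ is itself built from the interface data of the very solution $u$ it penalizes, so the estimate is not a priori self-contained: one must guarantee that $\|\tilde g^{(k+1)}\|_\infty$ stays bounded uniformly as $\eta\to0$. I would close this circle by exploiting the linearity of the extension in $u$ noted earlier: since $u$ is affine on $[-1,0]$, its only nontrivial interface datum is the slope $a:=u'(0)$, so $\tilde g=a\,\Phi$ for a fixed, $\eta$-independent profile $\Phi$ (a scaled copy of $B_1$). The representation above then reads $u(0)=a\,c(\eta)$ with $c(\eta)=O(\eta^{(k+1)/2})$, while the affine left interval forces $u(0)=1+a$; eliminating $u(0)$ yields the scalar equation $a\big(1-c(\eta)\big)=-1$, hence $a\to-1$ and $u(0)=O(\eta^{(k+1)/2})$. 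The boundedness of $a$ retroactively justifies the uniform control of $\|\tilde g^{(k+1)}\|_\infty$ used above. The remaining ingredients---the affine reduction, the Green's identity, and the integrations by parts---are routine once this coupling is untangled.
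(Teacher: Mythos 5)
Your proof is correct, but it takes a genuinely different route from the paper's. The paper solves the penalized ODE explicitly on both sides of the interface: it writes $\tilde g(x) = cx + x^{k+1}R(x)$ in Taylor form with $c = u'(0)$, rescales $x = \eta^{1/2}X$, builds a bounded particular solution $Q$ by variation of parameters, and then imposes the two $C^1$ matching conditions at $x=0$ to solve for the free constants, obtaining the error exactly as $(Q(0)+Q'(0))\,\eta^{(k+1)/2}$. You never construct the solution on $[0,\infty)$ at all: after the same reduction of the global error to $|u(0)|$, you pass to $w = u - \tilde g$, test against the decaying homogeneous solution $e^{-x/\sqrt{\eta}}$ to obtain an exact representation of $w(0)$, and extract the rate by repeated integration by parts, with the matching hypotheses annihilating precisely the boundary terms. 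Each approach buys something. The paper's explicit solution exhibits the entire boundary-layer profile, which it reuses in the subsequent remark comparing one-sided and two-sided derivative data; your duality argument is shorter on the ODE side, makes transparent that each matched derivative contributes exactly one factor of $\eta^{1/2}$, and---notably---explicitly closes the self-consistency loop: since $\tilde g$ is built from the unknown slope $u'(0)$, the constants in the estimate are not a priori uniform in $\eta$, an issue the paper passes over (its bound $R_m$ implicitly depends on $c$) and which your scalar equation $a\big(1-c(\eta)\big)=-1$ resolves cleanly, also giving existence of the coupled solution. Two caveats: your iterated integration by parts requires $\tilde g^{(m)}$ bounded for $2\le m\le k+1$ (the paper analogously assumes $\tilde g$ and $\tilde g'$ bounded inside its proof so that $R$ and $R'$ are bounded; both requirements hold for the compactly supported construction of Section \ref{Section_Extension}); and your use of $w'(0)=0$ relies on hypothesis 2, i.e.\ $k\ge 1$, so the $k=0$ case (standard penalty, where $\tilde g''$ need not even exist) requires a separate one-line modification, whereas the paper's computation covers it uniformly.
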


\begin{proof}

In the region $-1 \leq x \leq 0$, $u$ has the solution
\begin{eqnarray}
	u = (1 + c) + c x
\end{eqnarray}
for some constant $c$.  To construct the solution on $x \geq 0$, we note that one may write $\tilde{g}$ as 
\begin{eqnarray} \label{Localg}
	\tilde{g}(x) = c x + x^{k + 1} R(x)
\end{eqnarray}
for some remainder function $R(x)$, where in general $R(0) \neq 0$.  By construction (\ref{Localg}) matches the first $k$ derivatives of $u$ at $x = 0^-$.  In addition, we assume that $\tilde{g}(x)$ and $\tilde{g}'(x)$ are bounded, so that $R(x)$ and $R'(x)$ are also bounded functions. 
On $x \geq 0$, $u$ then solves
\begin{eqnarray}
	\partial_{xx} u - \eta^{-1} u = -\eta^{-1}\big(c x + x^{k + 1} R(x)\big)
\end{eqnarray}
To obtain the correct scaling, we rescale $x = \eta^{1/2}X$ to obtain
\begin{eqnarray}
	\partial_{XX} u - u = -c \eta^{1/2} X + \eta^{(k+1)/2} X^{k + 1} R(\eta^{1/2} X)
\end{eqnarray}
The general solution is then
\begin{eqnarray}
	u(X) = \begin{cases}
		(1 + c) + c \eta^{1/2} X & \text{if } X < 0 \\
		b e^{-X} + c \eta^{1/2} X + \eta^{(k+1)/2} Q(X) & \text{if } X \geq 0
	\end{cases}
\end{eqnarray}
where we have excluded the term $e^{X}$ since it diverges as $X \rightarrow \infty$.  In addition, $Q(X)$ is a particular solution (which stays bounded as $X \rightarrow \infty$ ) to
\begin{eqnarray}
	Q_{XX} - Q =  X^{k + 1} R(\eta^{1/2} X)
\end{eqnarray}
For instance, one may write a particular solution as
\begin{eqnarray}
	Q(X) &=& \frac{1}{2}\int_{0}^{X} (e^{X - y} - e^{-X + y}) y^{k+1}R(\eta^{1/2} y) \du y - A e^{X} \\
	A &=& \frac{1}{2} \int_0^{\infty} e^{-y} y^{k+1} R(\eta^{1/2} y) \du y
\end{eqnarray}
Letting $R_m = \max_{y} |R(y)|$, we also have the bound
\begin{eqnarray}
	|Q(0)| = |A| &\leq& \frac{R_m}{2} \int_0^{\infty} y^{k+1}e^{-y} \du y = Q_0.
\end{eqnarray}
The same type of argument holds for bounding $|Q'(0)| \leq Q_1$. 

To solve for the unknown constants, $c$ and $b$, we use the fact that $u$ and $u'$ are continuous across $x = 0$.  We therefore obtain the two equations
\begin{eqnarray}
	1 + c &=& b + \eta^{(k+1)/2} Q(0) \\
	\eta^{1/2} c &=& -b + \eta^{1/2}c + \eta^{(k+1)/2} Q'(0)
\end{eqnarray}
Upon solving for $b$ and $c$, the error between $u$ and $v$ on the physical domain $-1 \leq x \leq 0$ is
\begin{eqnarray}
	\max_{x \in [-1, 0]} |u - v| &=& 1 + c\\
	&=& (Q(0) + Q'(0) ) \eta^{(k+1)/2} \\
	&\leq& (Q_0 + Q_1) \eta^{(k+1)/2} \\	
	&=& O(\eta^{(k+1)/2})
\end{eqnarray}
Hence, for the model problem, matching $k$ derivatives of $\tilde{g}$ yields a convergence rate of $(k+1)/2$.  In particular, when $k = 0$, we recover the known convergence rate $\eta^{1/2}$ of the standard penalty method.
\end{proof}

\begin{remark}
	 Using higher derivatives in the construction of $\tilde{g}$ which are taken as limits from the domain $x \searrow 0^+$, does not yield the convergence rate stated in proposition (\ref{Proposition1}).  As an example, we take $\tilde{g}^+ = u'(0) B_1(x) + u''(0^+) B_2(x)$ where 
\begin{eqnarray}
	B_1(x) &=& \frac{5}{2}e^{-x} - 4 e^{-2x} + \frac{3}{2}e^{-3x} \\
	B_2(x) &=& \frac{1}{2}e^{-x} - e^{-2x} + \frac{1}{2}e^{-3x}.
\end{eqnarray}
For such a $\tilde{g}^+$, the solution $u$ to problem (\ref{ModelPenaltyEq}) yields only a first order error
\begin{eqnarray}
	\max_{[-1,0]}|u - v| \sim \frac{11}{6}\eta.
\end{eqnarray}
In contrast, taking $\tilde{g}^- = u'(0) B_1(x) + u''(0^-) B_2(x)$ yields a convergence rate in agreement with (\ref{Proposition1})
\begin{eqnarray}
	\max_{[-1,0]}|u - v| \sim 11 \eta^{3/2}.
\end{eqnarray}
\myremarkend
\end{remark}

\section{Stability} \label{Section_Stability}
In this section we establish numerical stability criteria for the $1D$ penalized heat equation.  To examine stability, we work with the domain $\Omega_p = (0, \pi)$, $\Omega_s = [\pi, 2\pi]$ and periodic boundary conditions.  Moreover, we take $g(0) = g(\pi) = 0$ to capture a $u = 0$ boundary condition at the fluid-solid boundary. A simple Euler scheme matching one derivative of $u$ at the interface is then 
\begin{eqnarray} \label{SimpleEuler}
	u^{n+1} &=& \big(I + \Delta t \; \Delta \big) u^n - \Delta t \; \eta^{-1} \chi_s \; (u^n - \tilde{g}^n) \\ \label{SimpleEuler2}
	\tilde{g}^n &=& u_x^n(\pi) B_1(x - \pi) - u_x^n(2\pi) B_1(2\pi-x)
\end{eqnarray}
In general, adding derivatives of $u$ to $\tilde{g}(x)$ can reduce, by factors of $h$, the time step restriction for an explicit scheme.  In the case at hand, however, the structure of $\tilde{g}(x)$ results in the same time step restriction as the original volume penalty method, namely
\begin{eqnarray} \label{TimeStepRestriction}
	\Delta t < \min \{ O(h^2), O(\eta) \}.
\end{eqnarray}
Here $h$ is either the grid spacing of a finite difference scheme, or alternatively $h^{-1}$ scales as the largest wavenumber in a Fourier method.

We note that although (\ref{SimpleEuler}) is a linear recursion relation, the right hand side is not a normal operator.  As a result, a rigorous proof of (\ref{TimeStepRestriction}) requires bounding the eigenvalues for the spatially discrete system (\ref{SimpleEuler}).  The analysis is further complicated by the fact that the operators (or matrices) on the right hand side of (\ref{SimpleEuler}) do not commute.

In this section we establish the time step restriction (\ref{TimeStepRestriction}). To do so, we first compute the eigenvalues for the penalty term using a finite difference scheme.  We show that although the penalty term contains derivatives of $u$, the eigenvalues remain $O(\eta^{-1})$ and do not become $O(\eta^{-1} h^{-1})$. 

Secondly, to show that the addition of the Laplacian does not alter the restriction (\ref{TimeStepRestriction}), we numerically compute the eigenvalues for equation (\ref{SimpleEuler}) using a finite difference scheme.

\subsection{Eigenvalues of the penalty term (Finite differences)}

In practice, one does not observe the time step restriction governed by the norm of $\tilde{g}$, but rather the larger bound in (\ref{TimeStepRestriction}).  Here we provide a stability criteria by analytically computing the penalty term eigenvalues for a finite difference scheme.

Let $x_k = k h$ for $0 \leq k \leq N - 1$ with grid spacing $h = 2\pi/N$.  Furthermore, denote the discrete vector $\mathbf{u} = [u(x_0) \, u(x_1) \ldots u(x_{N-1})]^T$. 

We are then interested in evaluating the eigenvalues of the penalty term 
\begin{eqnarray}
	\mathbf{B} \mathbf{u} &=& \lambda \mathbf{u} \\
	\mathbf{B} &=& - \eta^{-1}( \mathbf{I}_\chi - \mathbf{v}_1 \mathbf{d}_1^T - \mathbf{v}_2 \mathbf{d}_2^T)
\end{eqnarray}
where $\mathbf{B}$ is the finite difference matrix corresponding to the penalty term. Here $\mathbf{I}_\chi$ is the identity matrix restricted to $x \in \Omega_s$ while $\mathbf{v}_1$ and $\mathbf{v}_2$, are vectors with components
\begin{eqnarray}
	(\mathbf{v}_1)_k &=& \chi_s(x_k) B_1(x_k - \pi) \\
	(\mathbf{v}_2)_k &=& -\chi_s(x_k) B_1(2\pi - x_k)
\end{eqnarray}
In addition, $\mathbf{d}_1$ and $\mathbf{d}_2$ are column vectors which approximate the derivatives of a vector $\mathbf{u}$ as
\begin{eqnarray}
	u_x(\pi) \approx \mathbf{d}_1^T \mathbf{u} \\
	u_x(2\pi) \approx \mathbf{d}_2^T \mathbf{u}
\end{eqnarray}
For instance, a centered difference approximation to the derivative $u_x(2\pi)$ would have $(\mathbf{d}_2)_{N-1} = -(2h)^{-1}$, $(\mathbf{d}_2)_1 = (2h)^{-1}$ and $(\mathbf{d}_2)_k = 0$ for $k = 0$ and $1 < k < N-1$.  Lastly, since the support of $B_1(x)$ is restricted to $x < 1$, the function $B_1(x - \pi) = 0$ for $x > \pi + 1$.  Hence, the numerical derivative of $B_1(x - \pi)$ at $x = 2\pi$ is zero (or similarly with $B_1(2\pi - x)$ at $x = 0$) 
\begin{eqnarray} \label{DerivativeVanish1}
	\mathbf{d}_2^T \mathbf{v}_1 &=& 0 \\ \label{DerivativeVanish2}
	\mathbf{d}_1^T \mathbf{v}_2 &=& 0.
\end{eqnarray}
Combining the orthogonality conditions (\ref{DerivativeVanish1})--(\ref{DerivativeVanish2}) with the fact that $\mathbf{I}_\chi \mathbf{v}_{1, 2} = \mathbf{v}_{1, 2}$, implies that $\mathbf{u} = \mathbf{v}_{1, 2}$ are eigenvectors with corresponding eigenvalues
\begin{eqnarray} \label{DiscreteEigenvalues1}
	\lambda_{1} &=& -\eta^{-1} (1 - \mathbf{d}_{1}^T \mathbf{v}_1) \\ \label{DiscreteEigenvalues2}
	\lambda_{2} &=& -\eta^{-1} (1 - \mathbf{d}_{2}^T \mathbf{v}_2). 
\end{eqnarray} 
All other eigenvalues of $\mathbf{B}$ then lie in the space perpendicular to $\mathbf{v}_1$ and $\mathbf{v}_2$ resulting in either $\lambda = 0$ or $\lambda = -\eta^{-1}$. The eigenvalues (\ref{DiscreteEigenvalues1})--(\ref{DiscreteEigenvalues2}) are therefore directly a result of the modified penalty term and depend specifically on the component values of $\mathbf{d}_{1,2}$.  As a result, the products $\mathbf{d}_{1 ,2}^T \mathbf{v}_{1, 2} \in (0, 1]$ depending on how one builds the numerical derivative vector $\mathbf{d}_{1, 2}$.  

As an example, taking a centered difference approximation to the derivative $u_x(2\pi)$ yields
\begin{eqnarray}
	\mathbf{b}_2^T \mathbf{v}_2 &=& \frac{1}{2h}( (\mathbf{v}_2)_1 - (\mathbf{v}_2)_{N-1} ) \\
&\approx& 0.5.
\end{eqnarray}
The second line follows since $(\mathbf{v}_2)_{N-1} = 0$ while $(\mathbf{v}_2)_1 \approx h$ because the function $B_1'(0) = 1$. 

In general, the product  $\mathbf{d}_{1 ,2}^T \mathbf{v}_{1, 2}$ will be a weighted average of the derivatives of $B_1(x)$ on the left and right of the interface.  As a result, all eigenvalues $\lambda$ of $\mathbf{B}$ satisfy $-\eta^{-1} \leq \lambda \leq 0$. Therefore, modifying the penalty term does not change the time step restriction $\Delta t < 2 \eta$ for a simple Euler scheme $\mathbf{u}^{n+1} = \mathbf{u}^n + \Delta t \; \mathbf{B}\mathbf{u}^n$.

\subsection{Numerical eigenvalues}

In the follow section we numerically compute the eigenvalues of (\ref{SimpleEuler})--(\ref{SimpleEuler2}) using a finite difference scheme for the spatial derivatives\footnote{Although not shown, a similar result of $\Delta t < \min \{  N^{-2}, 1.1 \eta \}$ holds for a Fourier scheme.}.  The scheme then has the form 
\begin{eqnarray} \label{DiscreteLinearSystem}
	\mathbf{u}^{n+1} &=& \big[ \mathbf{I} + \Delta t \; (\mathbf{L} + \mathbf{B}) \big] \mathbf{u}^n 
\end{eqnarray}
where $\mathbf{L}$ is the standard 3-point stencil discrete Laplacian.  As a result, the eigenvalues of the linear system (\ref{DiscreteLinearSystem}) approach the real values associated with the Laplacian $\Delta$ when $\eta \rightarrow \infty$, and the values associated with the penalty term when $\eta \rightarrow 0$.

To compute the eigenvalues numerically, we fix a grid with $N$ points ($256 \leq N \leq 4096$) and examine the range $10^{-9} \leq \eta \leq 1$.  
\begin{proposition} \label{Proposition3}
	(Practical stability) In practice, the numerical scheme (\ref{DiscreteLinearSystem}) is stable provided one takes the time step restriction
	\begin{eqnarray} \label{StabilityCrit3}
		\Delta t < \min \{ 0.5 h^2, 1.2 \eta \}.
	\end{eqnarray}
\end{proposition}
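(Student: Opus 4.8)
The plan is to treat (\ref{DiscreteLinearSystem}) as a forward Euler update $\mathbf{u}^{n+1} = \mathbf{M}\mathbf{u}^n$ with amplification matrix $\mathbf{M} = \mathbf{I} + \Delta t\,(\mathbf{L}+\mathbf{B})$, and to establish the spectral condition that every eigenvalue $\lambda$ of $\mathbf{L}+\mathbf{B}$ lies in the forward Euler stability disk $|1 + \Delta t\, \lambda| \leq 1$. Writing $\lambda = a + i b$, this is the requirement $(1 + \Delta t\, a)^2 + (\Delta t\, b)^2 \leq 1$, so I must control both the negative real part and the imaginary part of the spectrum across the full parameter range $256 \leq N \leq 4096$ and $10^{-9} \leq \eta \leq 1$.

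First I would dispose of the two decoupled limits, which fix the two competing constants in (\ref{StabilityCrit3}). As $\eta \to \infty$ the penalty matrix $\mathbf{B} \to 0$ and $\mathbf{L}+\mathbf{B}$ reduces to the $3$-point Laplacian, whose eigenvalues $-4 h^{-2}\sin^2(k\pi/N)$ are real and fill $[-4h^{-2}, 0]$; the disk condition then collapses to the familiar diffusion bound $\Delta t \leq \tfrac{1}{2} h^2$, i.e. the $0.5 h^2$ term. As $\eta \to 0$ the penalty term dominates, and by the spectral analysis of the previous subsection its eigenvalues lie in $[-\eta^{-1}, 0]$, giving the scalar bound $\Delta t \leq 2\eta$. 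The slightly smaller empirical constant $1.2$ in (\ref{StabilityCrit3}) is the signature that the combined spectrum is \emph{not} simply the real interval obtained by superposing these two limits.

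The core step is to show that in the crossover regime, where neither $\mathbf{L}$ nor $\mathbf{B}$ dominates, the spectrum of the non-normal, noncommuting sum $\mathbf{L}+\mathbf{B}$ still lies within the single Euler stability disk prescribed by $\Delta t = \min\{0.5 h^2, 1.2\eta\}$. Because $\mathbf{L}$ and $\mathbf{B}$ do not commute I cannot diagonalize them simultaneously, and the eigenvalues of the sum are not the sums of eigenvalues; in particular, the rank-two coupling $\mathbf{v}_1\mathbf{d}_1^T + \mathbf{v}_2\mathbf{d}_2^T$ introduced by the derivative rows can push eigenvalues off the real axis, so that a nonzero imaginary part $b$ tightens the real constraint through the $(\Delta t\, b)^2$ term. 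This is precisely what erodes the ideal constant $2$ down to the empirical $1.2$. My approach is to compute $\mathrm{eig}(\mathbf{L}+\mathbf{B})$ directly on the stated $(N,\eta)$ grid, verify that $\max_\lambda |1 + \Delta t\, \lambda| \leq 1$ whenever $\Delta t < \min\{0.5 h^2, 1.2\eta\}$, and exhibit near-sharpness by locating, for representative $(N,\eta)$, an eigenvalue that reaches the boundary of the disk as $\Delta t$ approaches the bound.

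The main obstacle is exactly the non-normality flagged before the proposition: there is no clean analytic formula for the spectrum of $\mathbf{L}+\mathbf{B}$, and power-boundedness of $\mathbf{M}$ for a non-normal matrix is in principle stronger than the spectral-radius condition (this is the content of the Kreiss matrix theorem, where transient growth governed by the resolvent norm can precede asymptotic decay). I would argue that for the \emph{practical} stability claimed here the eigenvalue test is the operative criterion, and that the numerically observed departure of the constant from $2$ to $1.2$ is both small and stable across three orders of magnitude in $\eta$ and over the refinement range in $N$, which is what makes (\ref{StabilityCrit3}) a reliable working bound rather than a rigorous power-boundedness estimate.
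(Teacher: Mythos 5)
Your proposal follows essentially the same route as the paper: the paper likewise treats (\ref{DiscreteLinearSystem}) as a forward Euler iteration, identifies the two limiting regimes (the pure Laplacian spectrum as $\eta \rightarrow \infty$ giving $0.5\,h^2$, and the analytically computed penalty spectrum in $[-\eta^{-1},0]$ as $\eta \rightarrow 0$), and then, because the sum is non-normal and the matrices do not commute, resorts to direct numerical computation of the eigenvalues of $\mathbf{L}+\mathbf{B}$ over the stated range $256 \leq N \leq 4096$, $10^{-9} \leq \eta \leq 1$ to verify the practical bound. Your added caveat about non-normality and the Kreiss-type distinction between spectral and power-boundedness criteria is a fair refinement of the paper's own acknowledgment of this difficulty, but it does not change the substance of the argument.
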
 

\begin{remark}
	The exact constant $1.2$ in (\ref{StabilityCrit3}) depends on numerical details such as how one interpolates derivatives to the interface or the nature of the functions $B_0(x), B_1(x) \ldots$. \myremarkend
\end{remark}
Here figure $\ref{Eigenvalues}$ shows that the numerical eigenvalues for $N = 2048$ and $\eta = 10^{-7}$ are stable with a time step restriction (\ref{StabilityCrit3}).

\begin{figure}[htb!]
	\centering
    \includegraphics[width = \textwidth]{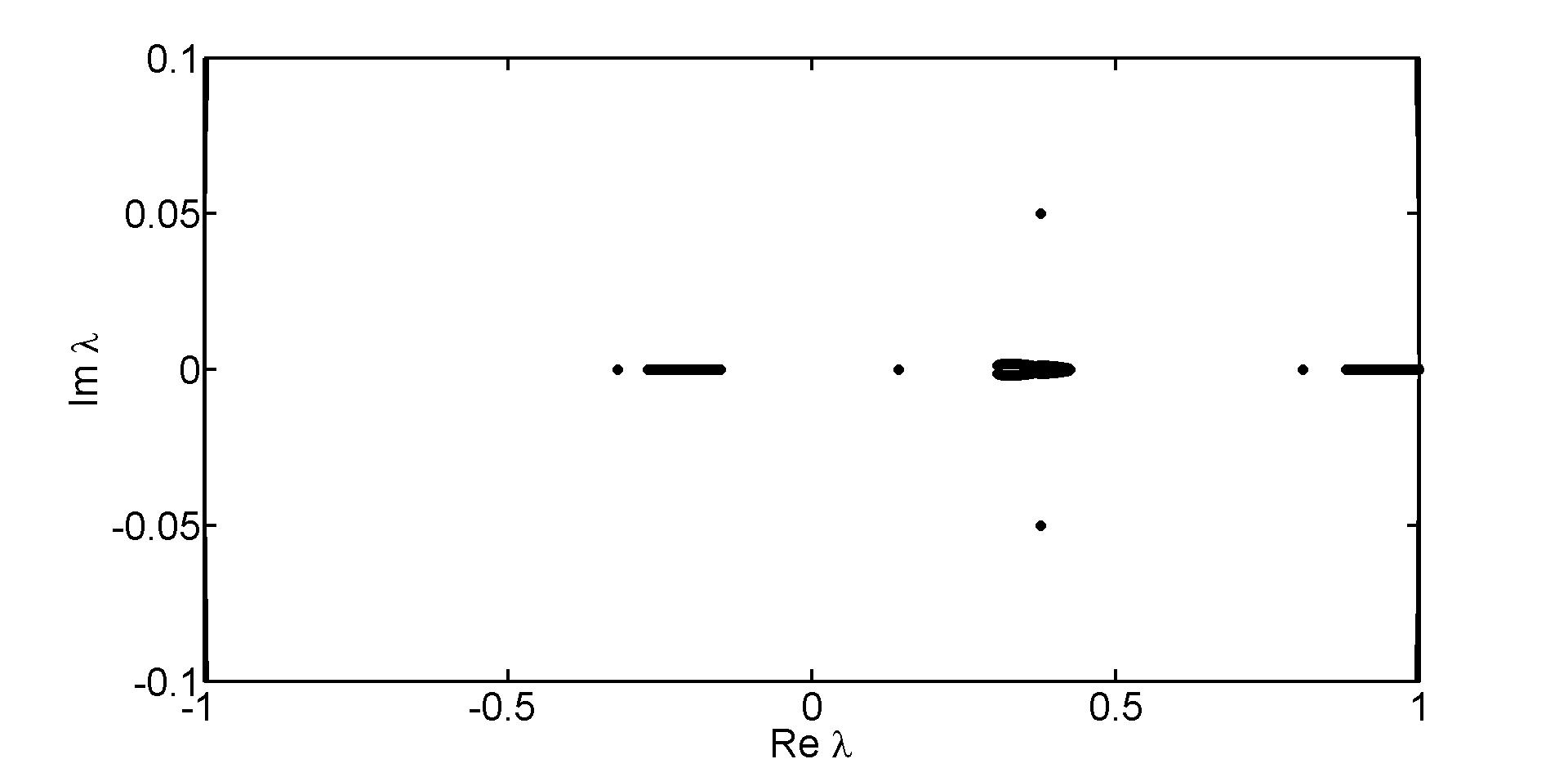} \\
    \caption{Scatter plot of the numerical eigenvalues for (\ref{DiscreteLinearSystem}). Here $N = 2048$, $\eta = 10^{-7}$ and $\Delta t$ is taken from (\ref{StabilityCrit3}).}
 \label{Eigenvalues}
\end{figure}

\section{Numerical example: Heat Equation} \label{Section_NumericsHeatEq}

In the following section we provide numerical examples for the heat equation in dimension $D = 1, 2$.  Specifically, we combine the analytic convergence and stability results from the previous sections to show that one may achieve high order numerical schemes.  As a starting point, we demonstrate high order convergence in $D = 1$ dimension.  We then move to $D = 2$, and outline additional details that arise from the numerical construction of the extension $\tilde{g}(\mathbf{x})$. 

\subsection{1D Heat Equation}

To test the convergence rates for the penalized heat equation, we use a manufactured solution approach.  We note that the forced heat equation on $x \in [0, 2\pi]$, 
\begin{eqnarray} \label{ForcedHeat}
	\partial_t u &=& u_{xx} + f \\ \label{ExternalForcing1}
	f &=& e^{\sin(x+t)}\big[ \cos(x+t) + \sin(x+t) - \cos^2(x+t)\big] \\
	u(x, 0) &=& e^{\sin(x)}
\end{eqnarray}
has an exact solution $u_e = e^{\sin(x+t)}$.  To quantify the total error, we penalize equation (\ref{ForcedHeat}) as
\begin{eqnarray} \label{ForcedPenalty1}
	\partial_t u_{\eta} &=& (u_{\eta})_{xx} + f - \eta^{-1}\chi_s \; (u_{\eta} - \tilde{g}) 
\end{eqnarray}
and compare the numerical solution of (\ref{ForcedPenalty1}) to the exact one from (\ref{ForcedHeat})\footnote{One can also restrict the forcing $\tilde{f} = f (1-\chi_s)$ to the physical domain, and obtain similar results.}.

To discretize in space, we use an equispaced grid with fourth order stencils for all derivatives.  In addition, we treat all terms explicitly in time with a second order (improved) Euler scheme.  When constructing the extension $\tilde{g}$, we first compute the derivatives of $u$ at each grid point, i.e. $u_x(x_k)$ or $u_{xx}(x_k)$.  We then interpolate the values of $u_x$ and $u_{xx}$ from the regular grid points to the points $x_{\Gamma}$ on the interface.

\begin{remark}
	The solution to the penalized heat equation $u$ has a discontinuous second derivative $u_{xx}$ across the interface.  As a result, interpolating $u_{xx}$ using regular grid points on both sides of the interface will produce a weighted average of right and left derivatives $u_{xx}(0^-)$ and $u_{xx}(0^+)$ in the construction of $\tilde{g}$.  We note that in practice, such a procedure does not appear to alter the final numerical convergence rate.
	\myremarkend 
\end{remark}

For our tests, we choose a solid region centered at $\pi$ to be $\Omega_s = [\pi - 0.7, \pi + 0.7]$.  To satisfy the stability restriction, we then take $\Delta t = 0.2 \; h^2$, $h = 2\pi/N$ and slave $\eta = 5 \; \Delta t$ so that all parameter values are fixed by the number of grid points $N$.  For each $N = 2^k$, with $6 \leq k \leq  12$, we then numerically integrate (\ref{ForcedPenalty1}) up to a final time of $T = 1$.  We repeat the procedure using $0, 1$ and $2$ derivatives of $u$ in constructing the extension $\tilde{g}$ and compare the numerical solution to the exact one (i.e. that of the unpenalized problem).  Here figure \ref{HeatConvergence1D} shows the convergence rates for matching different derivatives, while figures \ref{ForcedSolution} and \ref{ErrorField1D} show a typical solution and the corresponding error respectively.

\begin{figure}[htb!]
	\centering
    \includegraphics[width = \textwidth]{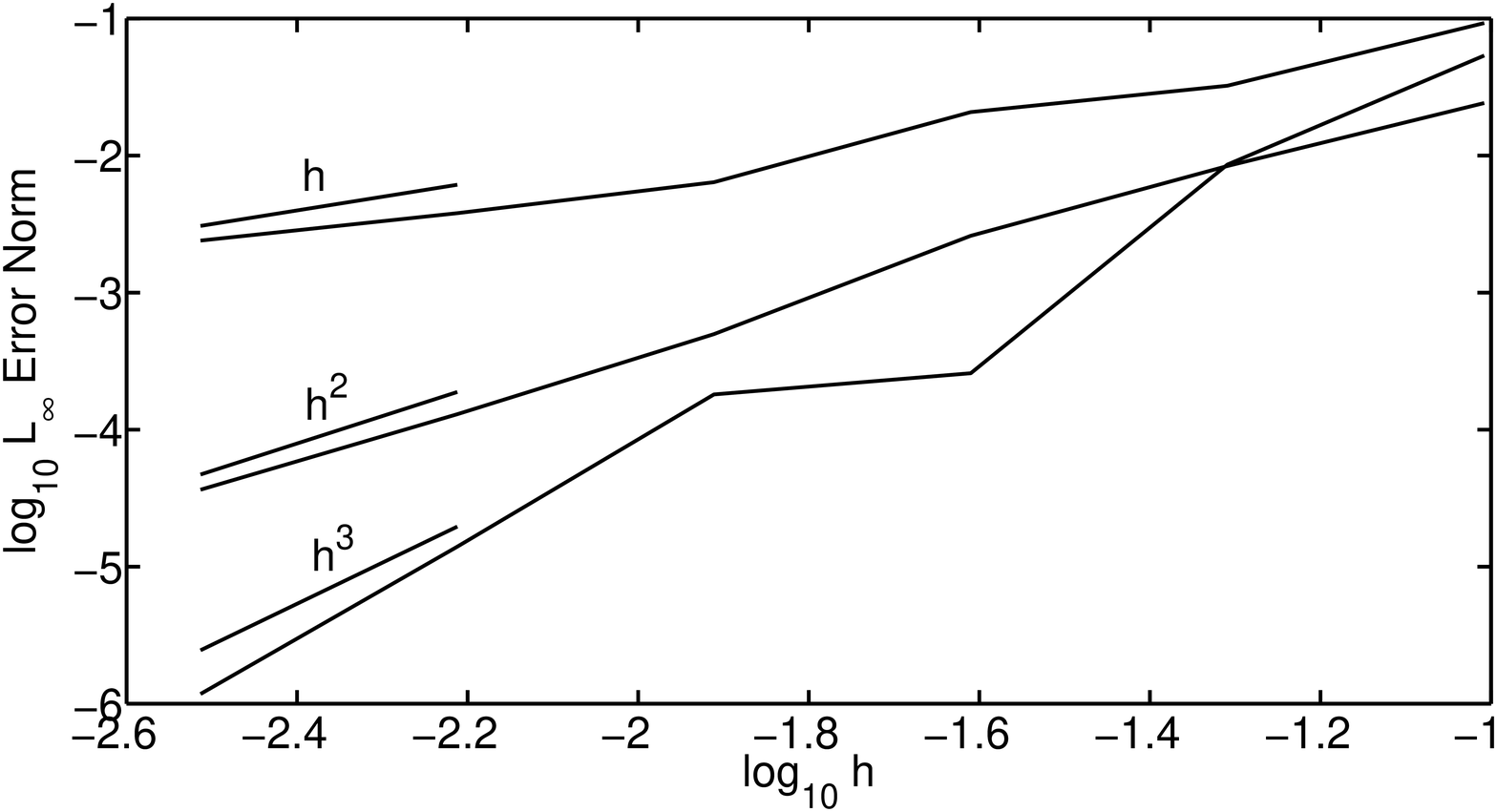} \\
    \caption{Plot of numerical errors $||u - u_{\eta, num}||_{\infty}$ for different values of $N$. The three curves correspond to building $\tilde{g}$ using $0, 1, 2$ derivatives of $u$ and result with convergence rates of $1, 2, 3$ respectively.}
 \label{HeatConvergence1D}
\end{figure}
 
\begin{figure}[htb!]
	\centering
    \includegraphics[width = \textwidth]{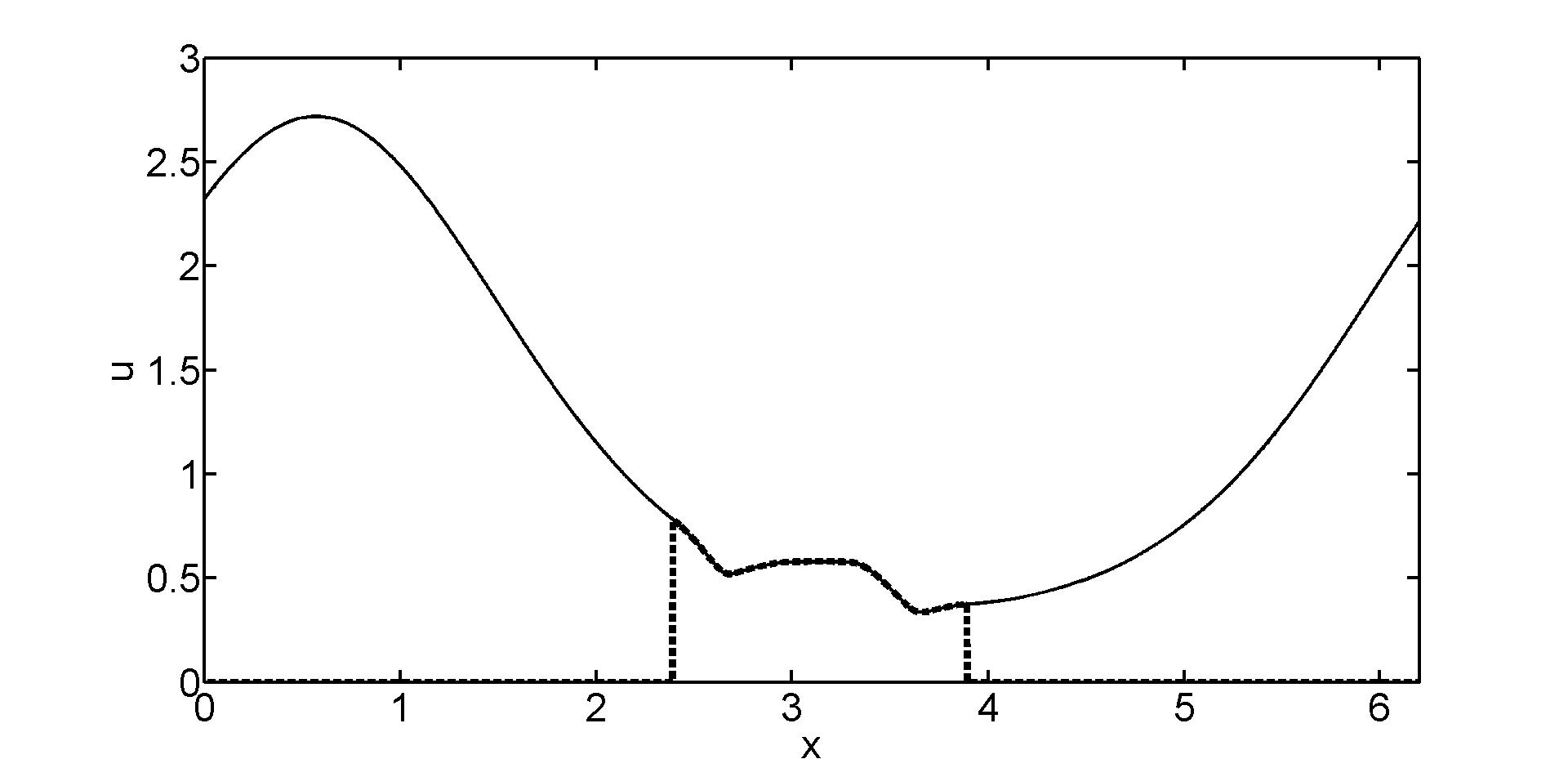} \\
    \caption{Plot of the numerical solution $u_{\eta, num}$ (thin line) with the extension $\tilde{g}$ (dashed line) for $N = 2048$. Here $\Omega_s = [\pi - 0.7, \pi + 0.7]$ is the solid domain}
 \label{ForcedSolution}
\end{figure}

\begin{figure}[htb!]
	\centering
    \includegraphics[width = \textwidth]{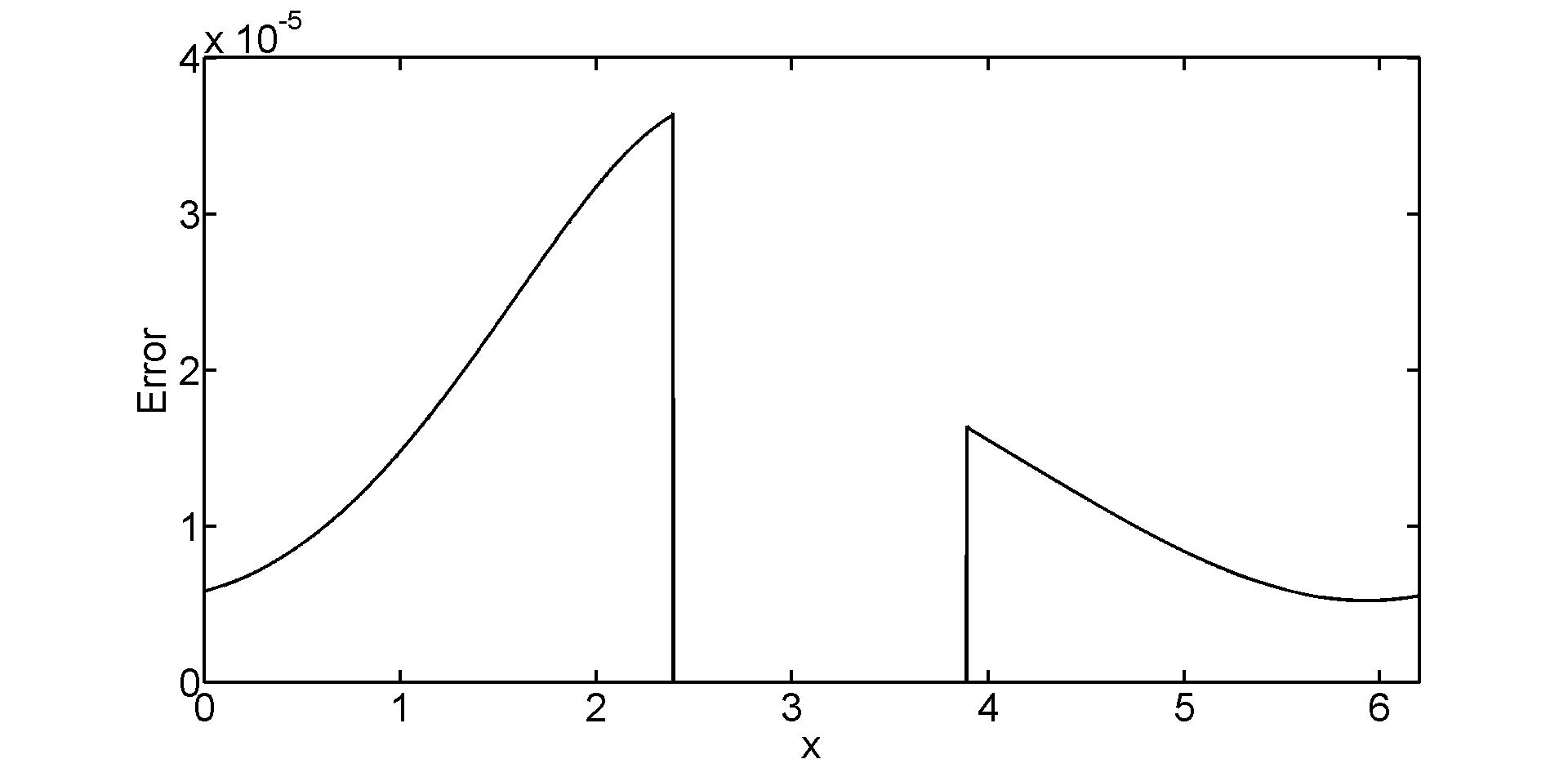} \\
    \caption{Plot of the total error in solving the penalized equations matching $1$ derivative of $u$ in the extension.  Here, $N = 2048$, $T = 1$ is the integration time, while $\Omega_s = [\pi - 0.7, \pi + 0.7]$ is the solid domain.}
 \label{ErrorField1D}
\end{figure}

\subsection{2D Heat Equation}

In the following subsection, we outline the numerical details for a $D = 2$ scheme.  Here we work with an equispaced, regular grid with $N \times N$ points ($64 \leq N \leq 512$), and immerse the boundary $\Gamma$.  The main difference when moving to higher dimensions is how one computes the extension $\tilde{g}(\mathbf{x})$.  To illustrate the construction, we refer to figure \ref{InterpolationGrid}.  To build $\tilde{g}(\mathbf{x})$ we first compute all appropriate derivatives at each grid point (both on $\Omega_s$ and $\Omega_p$).  For each grid point $\mathbf{x} \in \Omega_s$ within a distance $l$ of $\Gamma$, we compute $\boldsymbol\xi(\mathbf{x})$ as the orthogonal projection of $\mathbf{x}$ onto $\Gamma$ and $s(\mathbf{x}) = ||\boldsymbol\xi(\mathbf{x}) - \mathbf{x}||_2$.  Using a regular 9 point stencil, we then perform a polynomial interpolation of all required derivatives from the grid points to $\boldsymbol\xi$.  Using the interpolated derivatives at $\boldsymbol\xi$, one can then compute the normal derivatives of $\mathbf{u}$ required in equation (\ref{Extension}) to construct $\tilde{g}(\mathbf{x})$ at each grid-point inside $\Omega_s$.  Figure \ref{Gtilde2D} illustrates a typical construction of $\tilde{g}(\mathbf{x})$. 

\begin{remark}
	For computational efficiency, one can precompute and store the values of $\boldsymbol\xi$ as well as the appropriate coefficients required to extrapolate derivatives to the interface $\Gamma$.
	\myremarkend
\end{remark}

As an example in $D = 2$, we take the computational domain $\Omega$ to be a periodic square with side length $2\pi$. For the penalized domain $\Omega_s$, we take a circle of radius $r = 1/2$ and center $(x_c, y_c) = (\pi, \pi)$.  The physical domain is then $\Omega_p = \Omega \setminus \Omega_s$.  To perform convergence tests, we again use a manufactured solution where $u_e = [e^{\sin(x)} + cos(y) ] \cos(t)$.  Here we perform a convergence test for the penalty parameter $\eta$.  To compute the convergence rate, we fix $N = 512$ and vary $5\times 10^{-5} \leq \eta \leq 10^{-1}$,  so that discrete numerical errors are smaller than the $\eta$-dependent error obtained by introducing the penalty term.  For different values of $\eta$, we then integrate the penalty equation for a time $T = 0.1$ and compute the error.  Figure \ref{ConvergencePlot2DHeat} shows the $L^{\infty}$ error between the penalized equation and the exact heat equation as a function of $\eta$.

\begin{figure}[htb!]
	\centering
    \includegraphics[width = \textwidth]{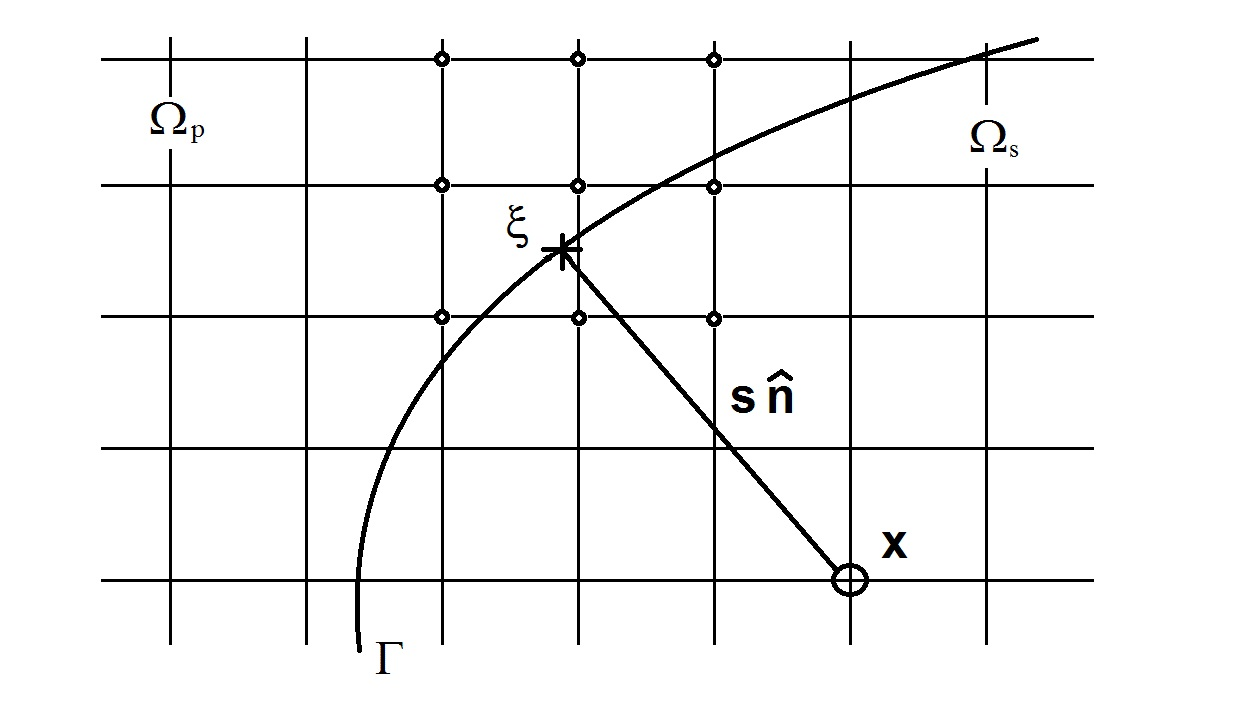} \\
    \caption{Regular grid with interpolation points.}
 \label{InterpolationGrid}
\end{figure}

\begin{figure}[htb!]
	\centering
    \includegraphics[width = \textwidth]{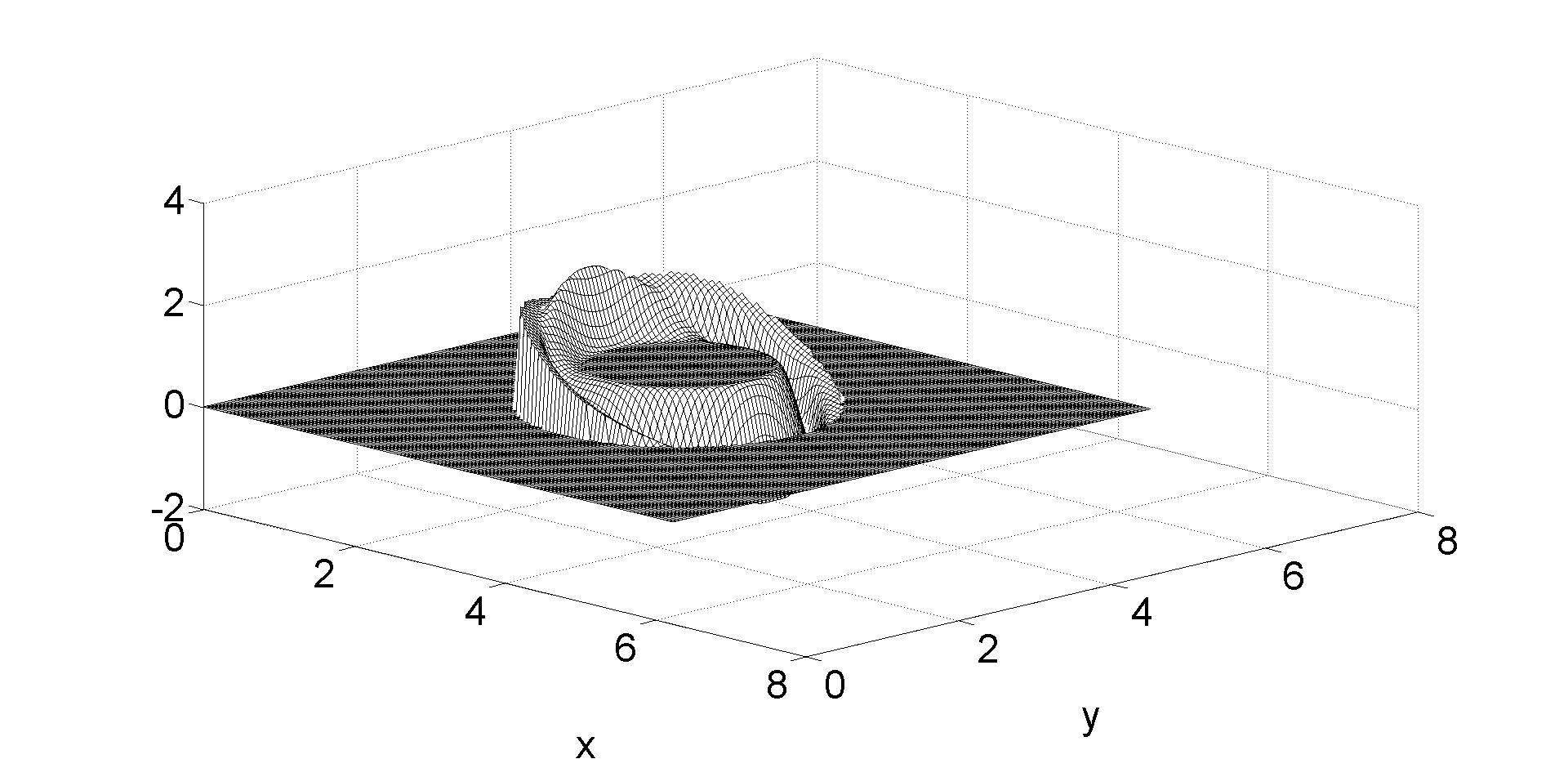} \\
    \caption{A sample 2D plot of $\tilde{g}$ matching $2$ normal derivatives of $u(\mathbf{x})$.  The plot is taken at $t = 0$ for the heat equation tests.}
 \label{Gtilde2D}
\end{figure}

\begin{figure}[htb!]
	\centering
    \includegraphics[width = \textwidth]{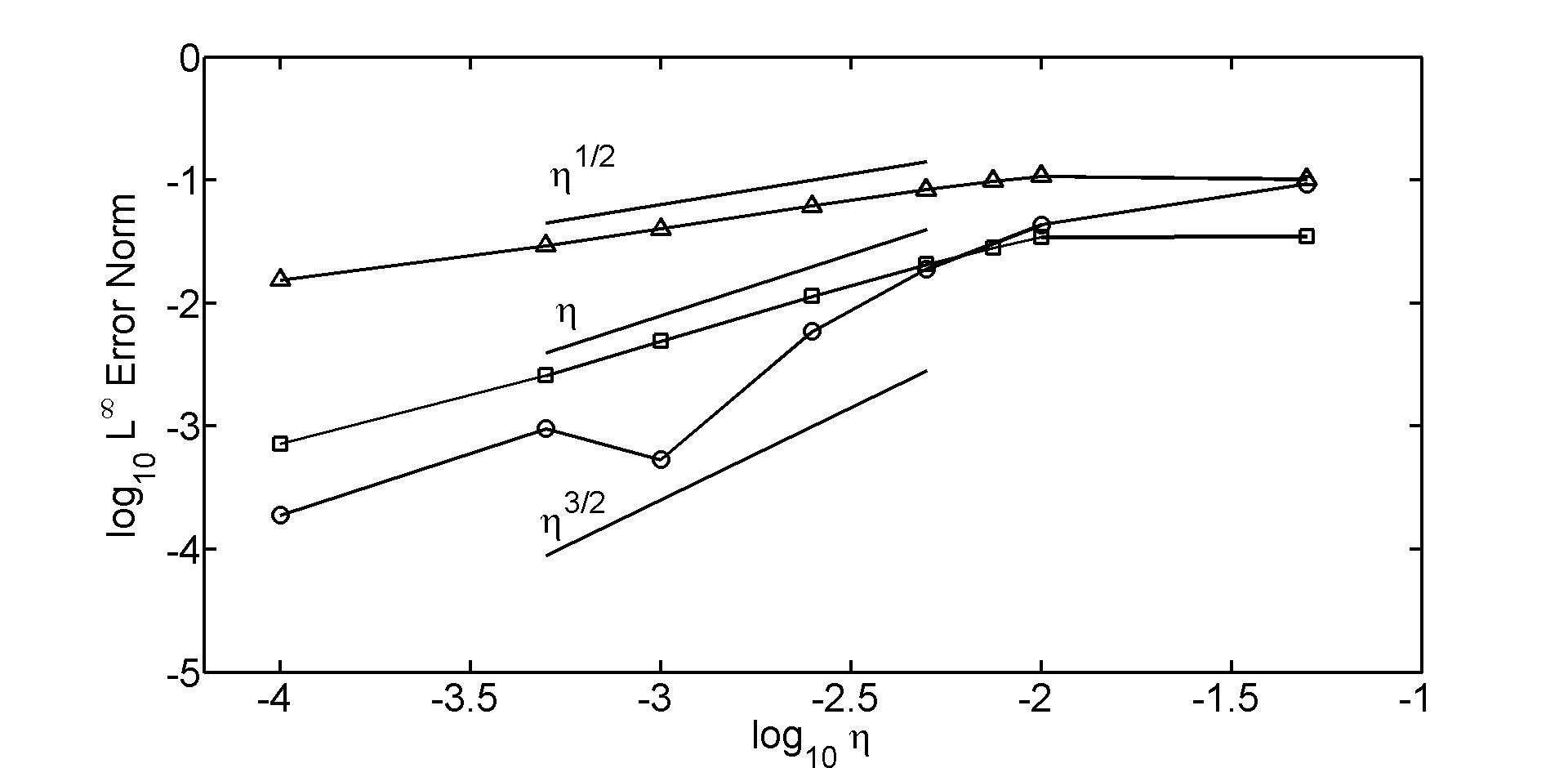} \\
    \caption{Converence plot of the $L^{\infty}$ error at $T  = 0.1$ for the heat equation tests. The plot shows curves for matching $0$ derivatives (triangles), $1$ derivative (squares), and $2$ derivatives (circles).  The straight lines compare the expected convergence rates of $O(\eta^{0.5})$, $O(\eta)$ and $O(\eta^{1.5})$ respectively.}
 \label{ConvergencePlot2DHeat}
\end{figure}

\section{Numerical example: 2D incompressible Navier-Stokes} \label{Section_NumericsNSEq}

The primary difficulty when transitioning from a penalized heat equation to the penalized incompressible Navier-Stokes equations is the addition of the velocity divergence constraint.  Other differences, such as moving from a scalar to a vector equation, or adding a nonlinear convective term do not pose new additional challenges to the penalized equations.  Intuitively, the difficulty with the divergence can be outlined as follows.  For the penalized heat equation, the active penalty term forces the function $u$ to closely track the extension function $\tilde{g}$.  When moving to a set of vector equations, the velocity vector $\mathbf{u}_{\eta}$ will closely track the term $\mathbf{\tilde{g}}$ inside the penalty region $\Omega_s$.  However, the component-wise construction of $\mathbf{\tilde{g}}$ will in general be such that $\nabla \cdot \mathbf{\tilde{g}} \neq 0$.  Consequently, to remain consistent, one should not force $\nabla \cdot \mathbf{u}_{\eta} = 0$ inside $\Omega_s$ but rather allow $\nabla \cdot \mathbf{u}_{\eta}$ to \emph{loosely} track $\nabla \cdot \mathbf{\tilde{g}}$. 

One approach for handling the divergence constraint is to replace $\nabla \cdot \mathbf{u}_{\eta} = 0$ with a Pressure Poisson Equation (PPE) \cite{Henshaw1994,JohnstonLiu2004,Shirokoff2011,ShirokoffRosales2011}.  Such an approach can provide a consistent method to compute the pressure and obtain high order schemes.  Since a PPE approach requires the additional solution of a Poisson equation with Neumann boundary conditions, we defer the implementation to future work. In our case, we utilize a projection method where we project the velocity divergence to zero inside the fluid domain.  We now discretize equations (\ref{ImprovedPenalty_Eq1})-(\ref{ImprovedPenalty_Eq2}) in time.

\subsection{Discretization in time}

Here we outline a pseudo-spectral scheme for solving the Navier-Stokes equations. For a second order scheme in $h$, we take a first order discretization in time with a time step restriction of the form outlined in (\ref{StabilityCrit3}).  Since the domain is $2\pi$ periodic, we can use the Fourier transform to invert the Poisson equation.  In the following algorithm we take a regular $N\times N$ grid.  We also denote the discrete Fourier transform by $\mathcal{F}$ so that $\hat{p}^{n}(\mathbf{k}) = \mathcal{F}[p^n]$ with $\mathbf{k} = (k_x, k_y)$ and $k = |\mathbf{k}|$.

\begin{algorithm} \label{Algorithm1} 
	(Navier-Stokes)
	\begin{enumerate}
		\item Given the velocity $\mathbf{u}_{\eta}^n$, compute an intermediate velocity $\mathbf{\tilde{u}}_{\eta}^{n+1}$
	\begin{eqnarray}
	\frac{\mathbf{\tilde{u}}_{\eta}^{n+1} - \mathbf{u}_{\eta}^n}{\Delta t} &=& \mathbf{F}^{n} - \frac{1}{\eta} \chi_s(\mathbf{x})(\mathbf{u}_{\eta}^n - \mathbf{\tilde{g}}^n) \\
		\mathbf{F}^n &=& -\mathbf{u}_{\eta}^n\cdot\nabla \mathbf{u}_{\eta}^n + \mu \Delta \mathbf{u}_{\eta}^n + \mathbf{f}^n
\end{eqnarray}

		\item Compute the pressure
		\begin{eqnarray} \label{DiscretePoisson}
			\Delta p_{\eta}^{n+1} &=& \frac{1}{\Delta t} ( \nabla\cdot \mathbf{\tilde{u}}_{\eta}^{n+1}) (1 - \chi_s) - \mathcal{A}.
		\end{eqnarray}
		For $k = 0$ set $\hat{p}_{\eta}^{n+1}(0) = 0$, while for $k \neq 0$	take	
		\begin{eqnarray} \label{DiscretePoisson2}
			\hat{p}_{\eta}^{n}(\mathbf{k}) &=& -\frac{1}{k^2} \mathcal{F}[  \frac{1}{\Delta t} ( \nabla\cdot \mathbf{\tilde{u}}_{\eta}^{n+1}) (1 - \chi_s)].
		\end{eqnarray}
		Note that $\mathcal{A}$ does not appear in the Fourier transform and at no time does one ever compute $\mathcal{A}$. The value of $\mathcal{A}$ is hidden as a consistency condition in setting $\hat{p}_{\eta}^{n+1}(0) = 0$. 
		\item Update the velocity $\mathbf{u}_{\eta}^{n+1}$
			\begin{eqnarray}
				\mathbf{u}_{\eta}^{n+1} &=& \mathbf{\tilde{u}}_{\eta}^{n+1} - (\Delta t)\mathcal{F}^{-1} [-\imath \mathbf{k} \hat{p}_{\eta}^{n}].
			\end{eqnarray}
	\end{enumerate}
	Note that either Fourier transforms or a second order finite difference scheme can be used when computing the derivatives $\partial_{x_j} \mathbf{u}_{\eta}$ in algorithm (\ref{Algorithm1}).  

Since the second derivatives are discontinuous, we compute $\Delta \mathbf{u}_{\eta}$ using finite differences.  
\end{algorithm}

In the Poisson equation for the pressure, $p_{\eta}$ is only determined up to a constant. To uniquely determine $p_{\eta}$ we enforce $\int_{\Omega} p_{\eta} \du V = 0$. Meanwhile the value of $\mathcal{A}$ is chosen so that the Poisson equation satisfies the standard solvability condition.  Namely, 
\begin{eqnarray} \label{SolvabilityCondition}
	\mathcal{A} &=& \frac{\Delta t^{-1}}{V} \int_{\Omega_p} \nabla \cdot \mathbf{\tilde{u}}_{\eta} \du V \\
					&=& \frac{\Delta t^{-1}}{V} \int_{\Gamma} \mathbf{\tilde{u}}_{\eta}\cdot \mathbf{n} \du A \\
					&=& \frac{\Delta t^{-1}}{V} \int_{\Gamma} (\mathbf{\tilde{u}}_{\eta} - \mathbf{g})\cdot\mathbf{n} \du A 
\end{eqnarray}
where in the last line we have used the fact that $\int_{\Gamma} \mathbf{g}\cdot \mathbf{n} = 0$. Here $V = \int_{\Omega_p} \du V$ is the volume of the physical domain. The last line also shows that $\mathcal{A}$ is not nearly as large as $\Delta t^{-1}$ since the jump $(\mathbf{u}_{\eta}-\mathbf{g})\cdot\mathbf{n}$ is expected to be small.  Finally, we make a remark on the projection of $\mathbf{u}$.  Inside $\Omega_p$, we have 
\begin{eqnarray}
	\nabla \cdot \mathbf{u} &=& \nabla \cdot \mathbf{\tilde{u}} - (\Delta t) \Delta p \\
	&=& (\Delta t) \mathcal{A} \\
	&\leq& C \max_{\mathbf{x} \in \Gamma} |\mathbf{\tilde{u}}_{\eta} - \mathbf{g}|
\end{eqnarray}
where $C$ is an appropriate constant.  Hence any error in the divergence of $\mathbf{u}$ is directly controlled by the error in the velocity boundary condition. In particular, for matching $1$ derivative, we expect $\nabla \cdot \mathbf{u} = O(\eta) + O(\Delta t)$ inside $\Omega_f$. As a result, we can recover a second order scheme, however systematically moving to a higher order method will require an alternative formulation, such as a PPE scheme, for computing the pressure.  

\begin{remark}
	In order to guarantee second order spatial accuracy in algorithm (\ref{Algorithm1}), $\mathbf{\tilde{g}}$ should match at least 1 derivative of $\mathbf{u}_{\eta}$.   
	\myremarkend
\end{remark}

\begin{remark}
	One could also consider solving the Poisson equation (\ref{DiscretePoisson}) with $\mathcal{A} = 0$ and instead impose an interface condition on the normal pressure gradient:
\begin{eqnarray} \label{JumpCondition}
	\lbrack \mathbf{n}\cdot\nabla p_{\eta} \rbrack_{\Gamma} &=&  \frac{1}{\Delta t}\mathbf{n}\cdot(\mathbf{\tilde{u}}_{\eta} - \mathbf{g})
\end{eqnarray}
\begin{eqnarray} \label{JumpDefinition}
	\lbrack f \rbrack_{\Gamma} := \lim_{\epsilon \rightarrow 0} \big( f(\mathbf{x} + \epsilon \mathbf{n}) - f(\mathbf{x} - \epsilon \mathbf{n}) \big)\hspace{4mm} \mathbf{x} \in \Gamma.
\end{eqnarray}
In the definition (\ref{JumpDefinition}), $\mathbf{n}$ is taken as the unit normal directed outward from $\Omega_p$.  Such an approach greatly simplifies the analysis for the behavior of the divergence $(\nabla\cdot \mathbf{u}_{\eta})$ in the resulting PPE scheme.  However, we note that numerically solving (\ref{DiscretePoisson}) with (\ref{JumpCondition}) and $\mathcal{A} = 0$ is harder than simply solving (\ref{DiscretePoisson})--(\ref{SolvabilityCondition}).  Furthermore, the equations also allow for a direct solution using pseudo-spectral methods, while the interface problem does not. \myremarkend
\end{remark}

To test the order of accuracy of the active penalty method, we again use a manufactured solution of the form $\mathbf{u}_e = (u_e, v_e)$ and $p_e$ where
\begin{eqnarray}
	u_e &=& \cos(x)\sin(y)\cos(t) \\
	v_e &=& -\sin(x)\cos(y)\cos(t) \\
	p_e &=& \sin(2x) \cos(y) \cos(t).
\end{eqnarray}

Given initial data corresponding to the exact solution, we numerically evolve the velocity $\mathbf{u}_{\eta}$ and pressure $p_{\eta}$ using the pseudo-spectral method outlined in algorithm \ref{Algorithm1}.  Here we match 1 derivative of $\mathbf{u}_{\eta}$ in the construction of $\mathbf{\tilde{g}}$ and take time steps, with the appropriate restriction, of $\Delta t = O(h^2) = O(\eta)$.  Figure \ref{NavierStokesCVGPlot} shows second order convergence of the velocity field (in $L^{\infty}(\Omega_p)$), as well as the pressure and divergence (in $L^2(\Omega_p)$).  Meanwhile, the pressure and the divergence converge at one order less in $L^{\infty}(\Omega_p)$.  As an example, figures \ref{VelocityError}--\ref{PressureError} show the typical error for velocity and pressure while \ref{FullDivergence}--\ref{DivergenceError} show the velocity divergence.  In addition, \ref{VelocityNS} and \ref{GtildeNS} show the horizontal velocity field along with the horizontal component of the extension $\mathbf{\tilde{g}}\cdot \mathbf{\hat{x}}$.  Note that $\mathbf{u}_{\eta}$ is again very close to $\mathbf{\tilde{g}}$ inside $\Omega_s$.

\begin{figure}[htb!]
	\centering
    \includegraphics[width = \textwidth]{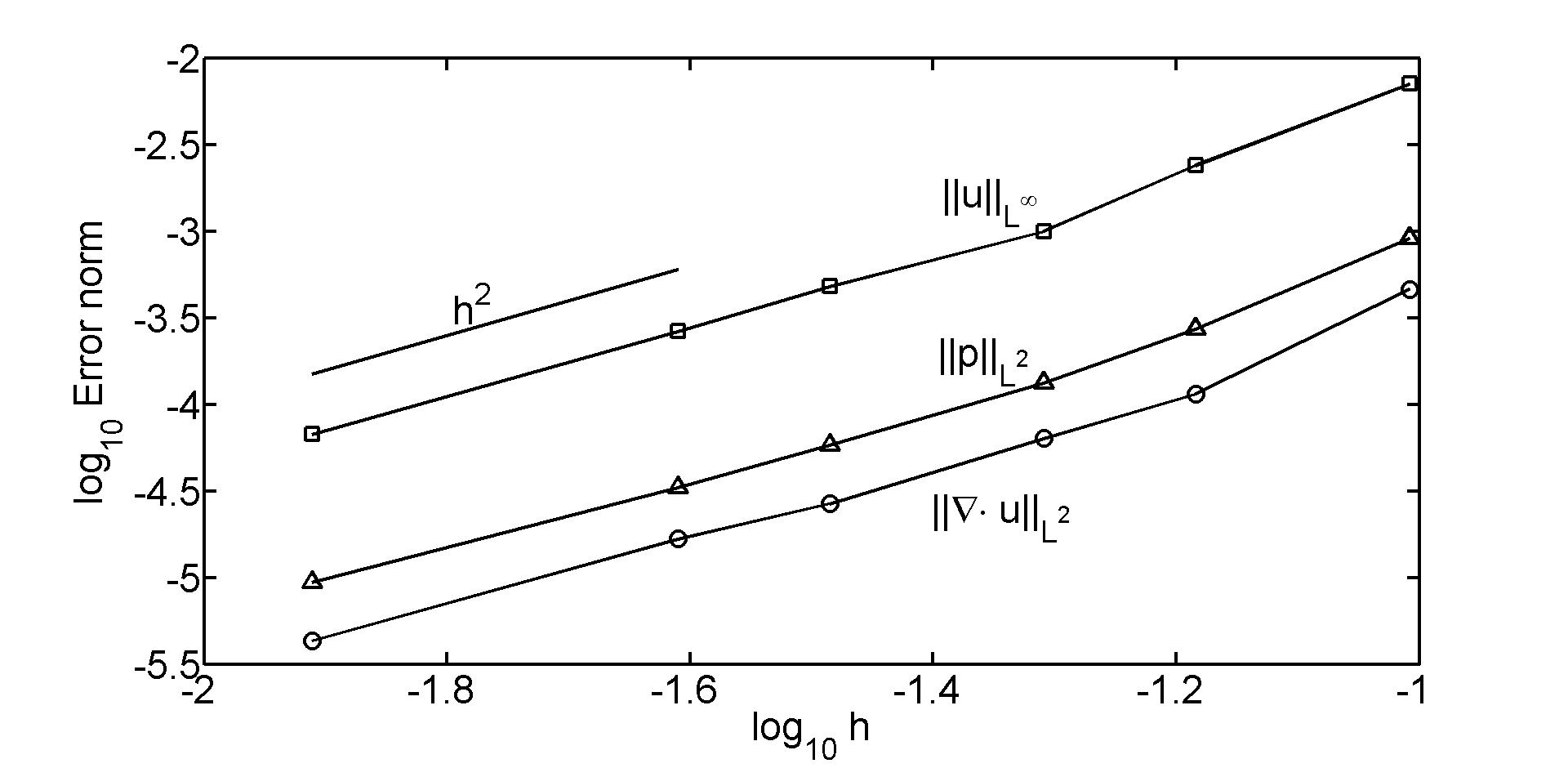} \\
    \caption{Navier-Stokes convergence plot. Second order convergence in $L^{\infty}(\Omega_p)$ for the velocity field (squares), and in $L^2(\Omega_p)$ for the pressure (triangles) and velocity divergence (circles).}
 \label{NavierStokesCVGPlot}
\end{figure}
\begin{figure}[htb!]
	\centering
    \includegraphics[width = \textwidth]{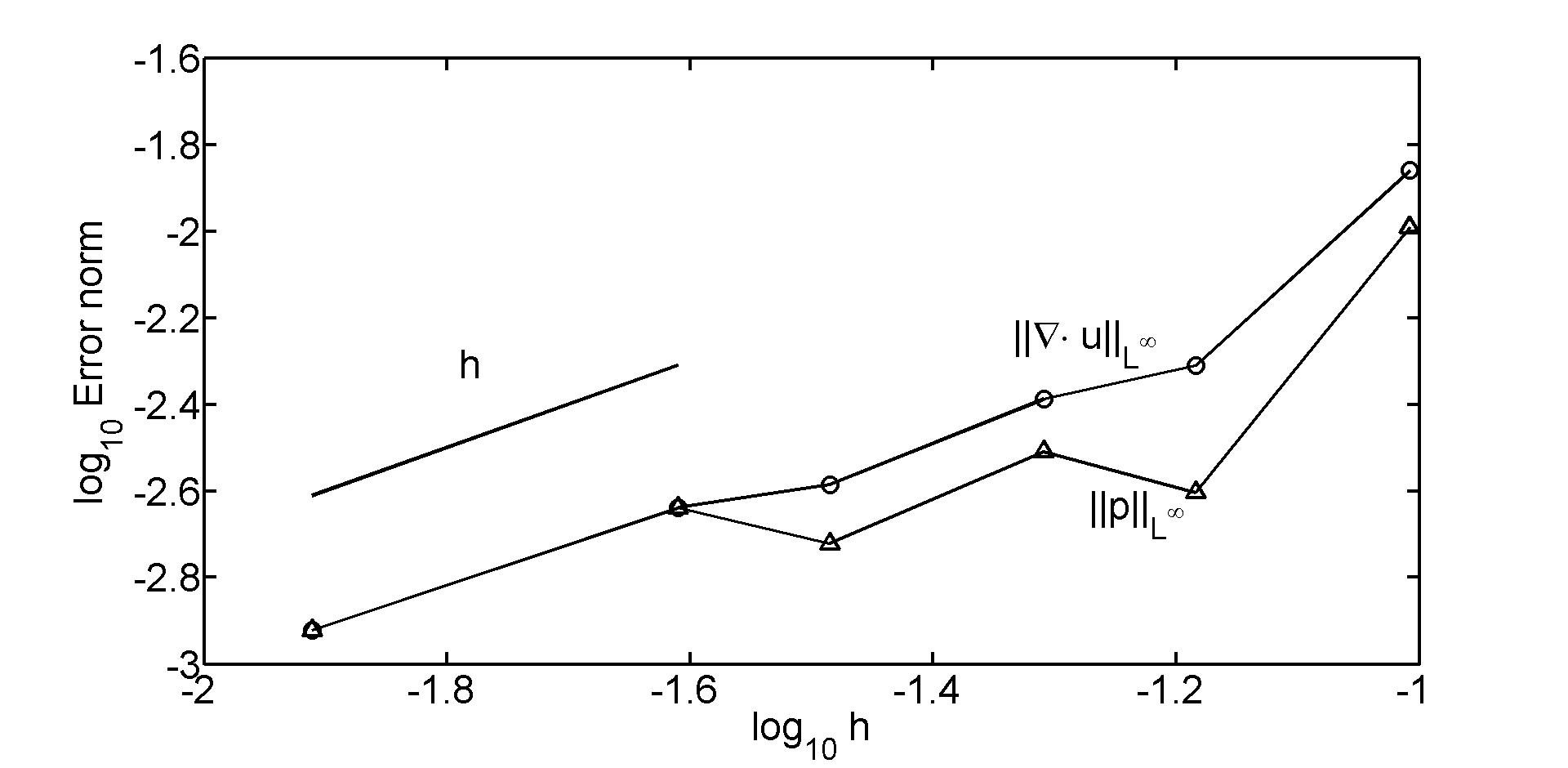} \\
    \caption{Navier-Stokes convergence plot. First order convergence in $L^{\infty}(\Omega_p)$ for the pressure (triangles) and velocity divergence (circles). The weaker convergence in $L^{\infty}(\Omega_p)$ is due to the  boundary layer in the pressure and divergence. The divergence was computed using second order finite differences.}
 \label{NavierStokesCVGPlot_2}
\end{figure}

\begin{figure}
  \centering  \subfloat[Velocity error]{\label{VelocityError}\includegraphics[width=0.5\textwidth]{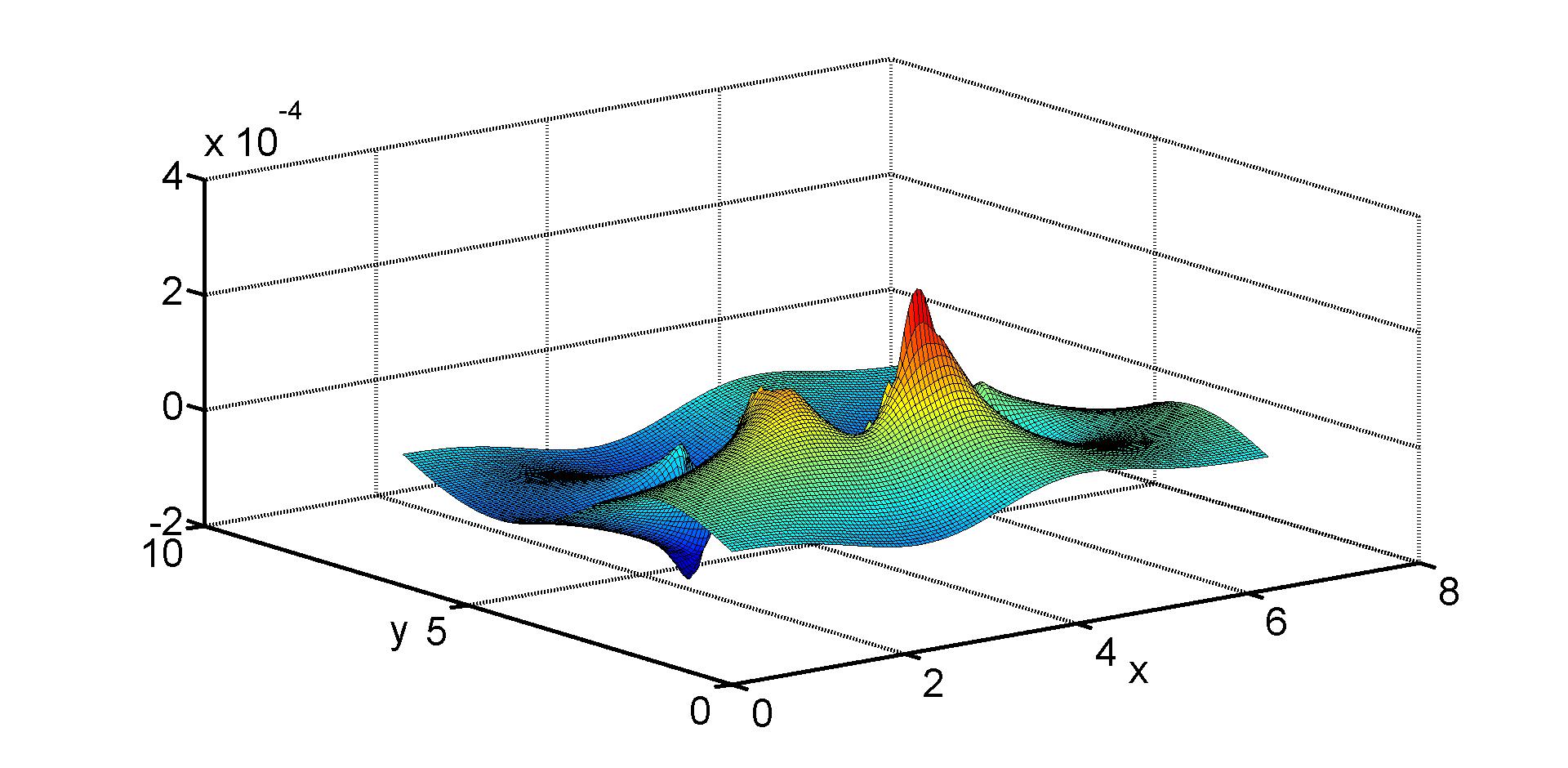}}  \subfloat[Pressure error]{\label{PressureError}\includegraphics[width=0.5\textwidth]{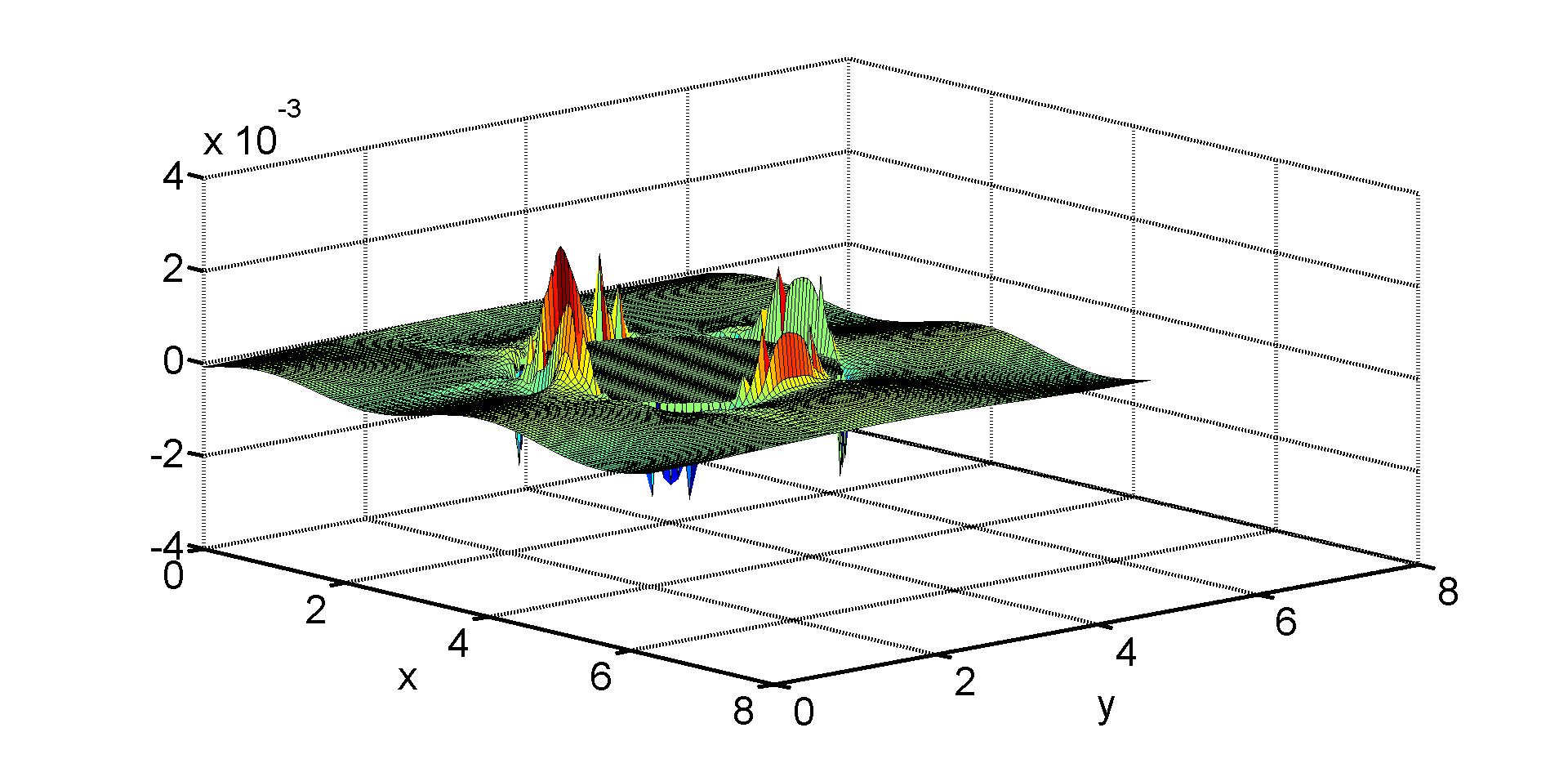}}
  \caption{Error fields with $N = 128$ for the velocity and pressure after $T = 1$.}
\end{figure}

\begin{figure}
  \centering  \subfloat[Full divergence]{\label{FullDivergence}\includegraphics[width=0.5\textwidth]{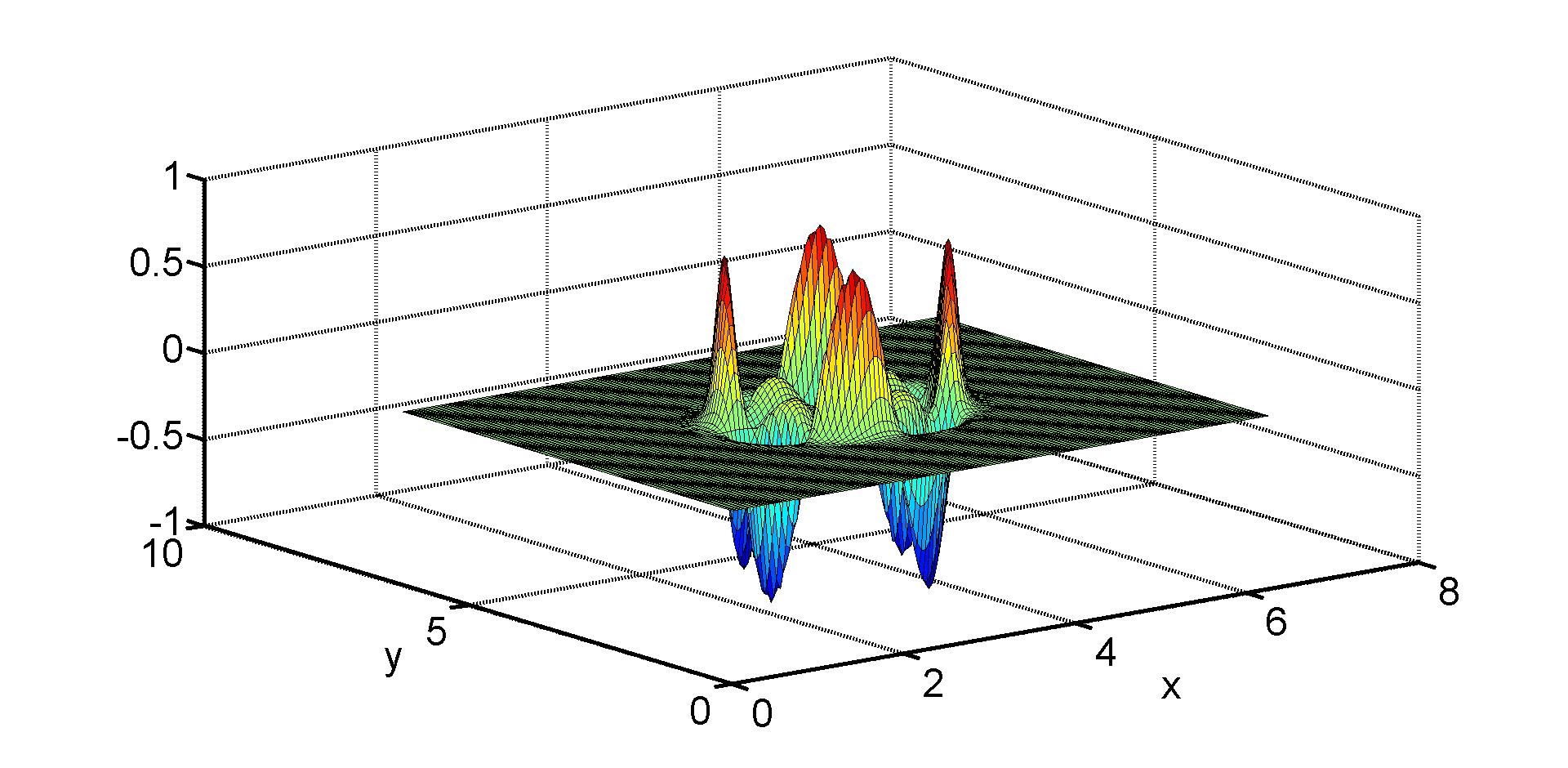}}  \subfloat[Divergence error]{\label{DivergenceError}\includegraphics[width=0.5\textwidth]{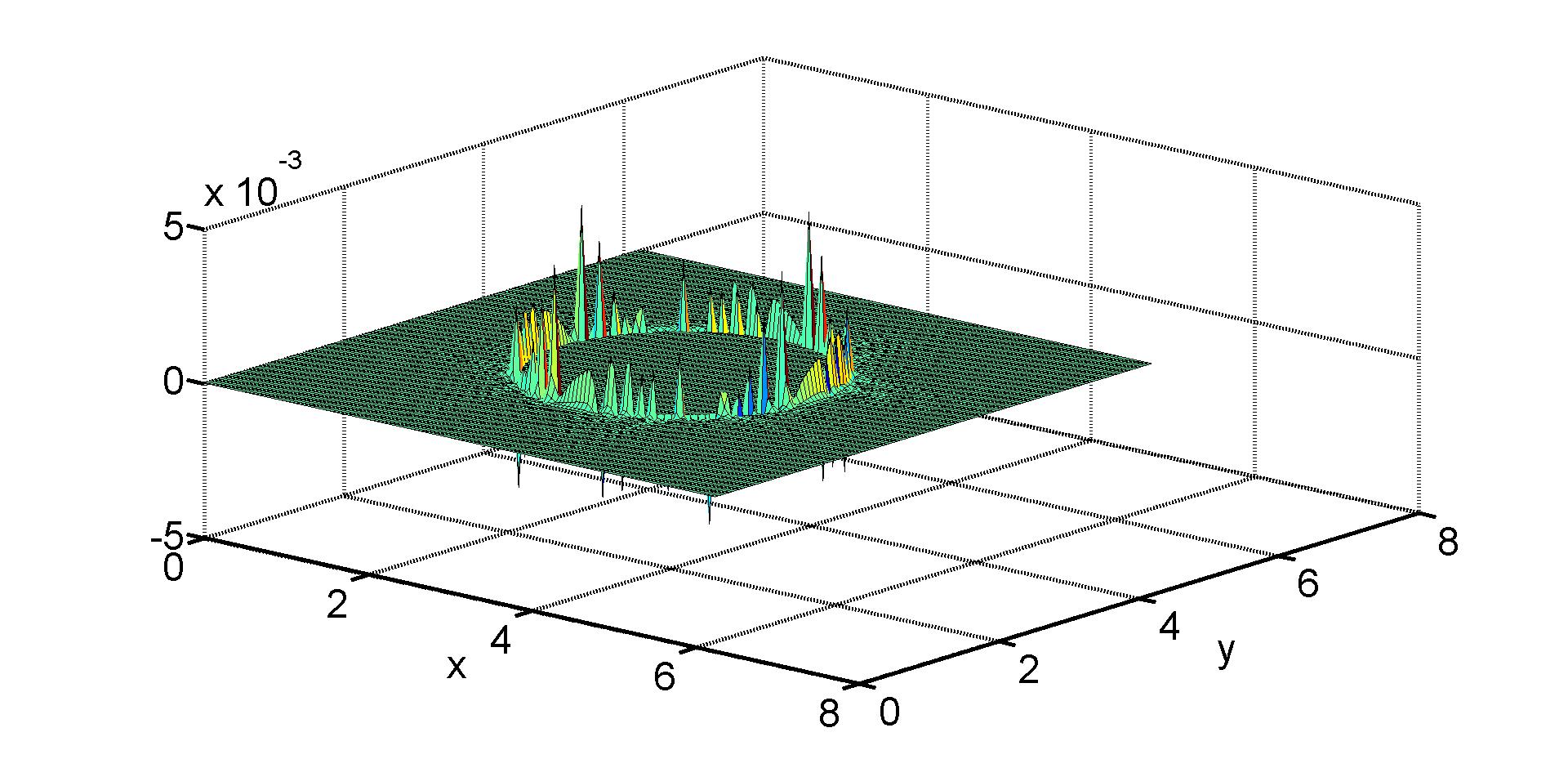}}
  \caption{Plots of the divergence $\nabla \cdot \mathbf{u}_{\eta}$ in $\Omega$ (left) and in $\Omega_p$ (right) with $N = 128$ after $T = 1$. The plot in $\Omega_p$ shows the $||\nabla\cdot\mathbf{u}_{\eta}||_{L^{\infty}(\Omega_p)}$ error occurs at a point in a boundary layer near $\Gamma$.}
\end{figure}

\begin{figure}
  \centering  \subfloat[Velocity $u_{\eta, Num}$]{\label{VelocityNS}\includegraphics[width=0.5\textwidth]{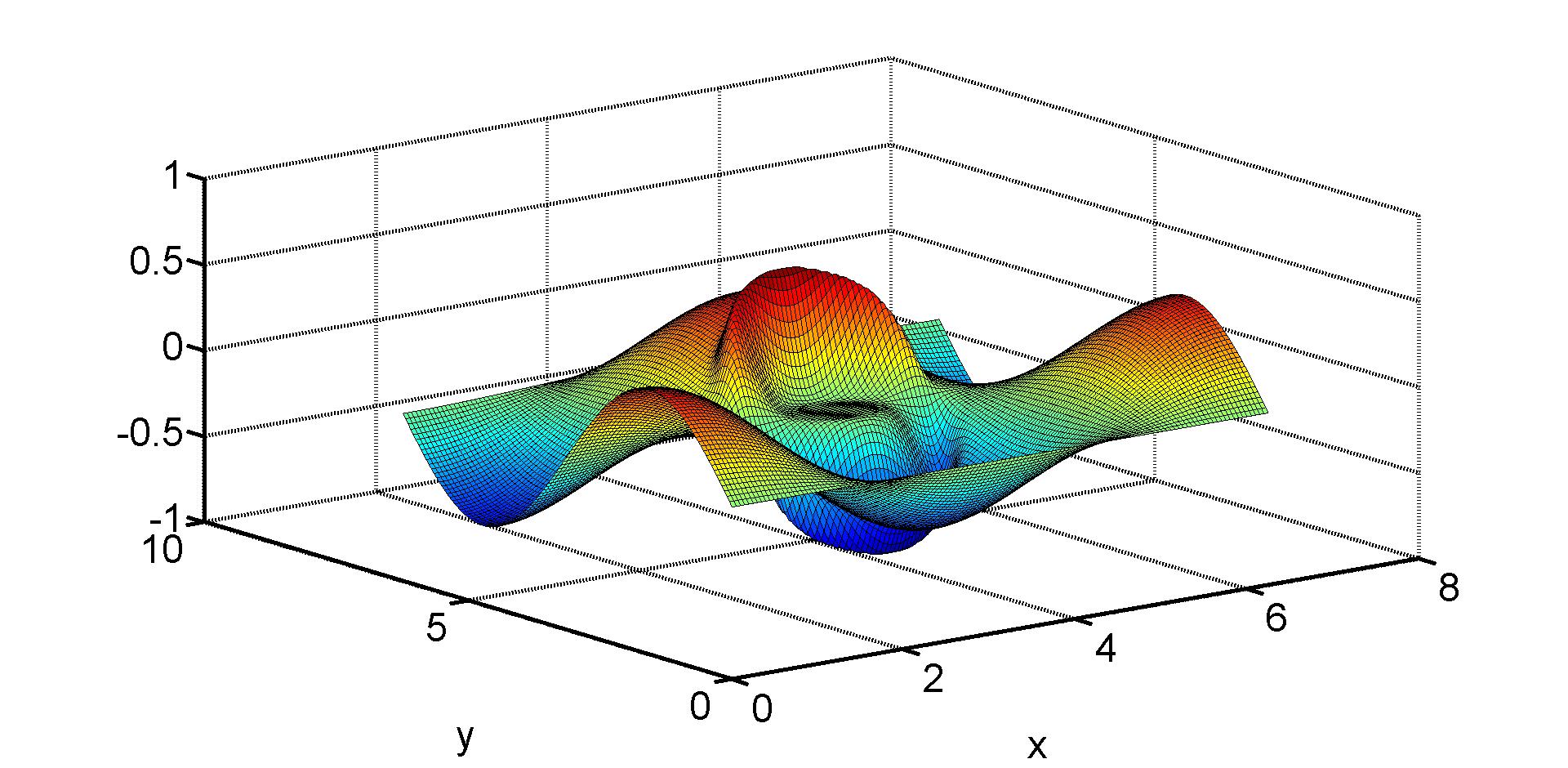}}  \subfloat[The extension $\mathbf{\tilde{g}}\cdot \mathbf{\hat{x}}$]{\label{GtildeNS}\includegraphics[width=0.5\textwidth]{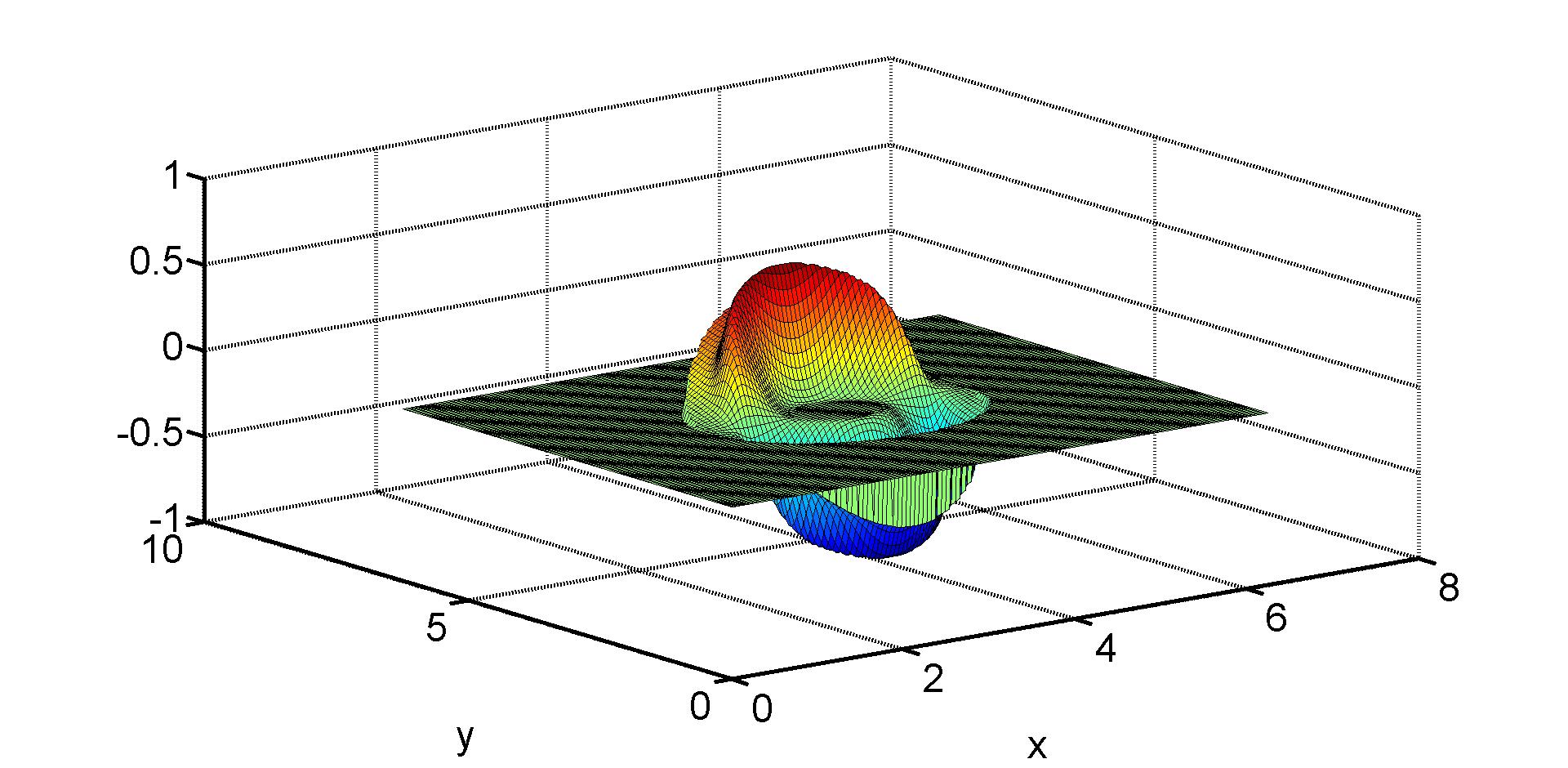}}
  \caption{The numerical velocity field for horizontal component $u_{\eta, Num}$ along with the extension function.  Here $N = 128$ and $T = 1$.}
\end{figure}

\section{Flow around an impulsively started cylinder}

In this section we test our method for the model problem of an impulsively started cylinder \cite{KoumoutsakosLeonard1995}. In this case, we solve the following initial value problem where the fluid starts at rest
\begin{eqnarray}
	\mathbf{u}_{\eta}(\mathbf{x}, 0) = 0 \hspace{5mm} \textrm{for} \, \mathbf{x} \in \Omega. 
\end{eqnarray}
The impulsively started cylinder is then modeled by a moving mask function with a time dependent set $\Omega_s(t)$ and the appropriate Dirichlet boundary condition. For $t > 0$ we have
\begin{eqnarray}
	\Omega_s(t) = \{ \mathbf{x} : |\mathbf{x} - \mathbf{x}_0 - \mathbf{u}_0 t| \leq R \}	\\
	\mathbf{u} = \mathbf{u}_0 \hspace{5mm} \textrm{for} \, \mathbf{x} \in \Gamma. 
\end{eqnarray}
Here $\mathbf{u}_0 = u_0 \mathbf{\hat{e}}_x$ is the velocity of the cylinder, and $(\mathbf{x}_0, R)$ denotes the center and radius of the cylinder.

To simplify the numerical calculation, we perform a Galilean transformation on the coordinates and solve the penalized equations with a stationary mask.  The velocity field then solves the equation
\begin{eqnarray} \label{NS_MovingFrame}
	\partial_t \mathbf{u}_{\eta} + (\mathbf{u}_{\eta} - \mathbf{u}_0)\cdot\nabla \mathbf{u}_{\eta} &=& -\nabla p_{\eta} + \mu \Delta \mathbf{u}_{\eta} + \mathbf{f} - \eta^{-1}\chi_s \; (\mathbf{u}_{\eta} - \mathbf{u}_0 - \tilde{\mathbf{g}}), 
\end{eqnarray}
with initial data $\mathbf{u}(\mathbf{x}, 0) = 0$. Here $\chi_s(\mathbf{x})$ is a stationary mask with $\Omega_s(0)$, while $\mathbf{\tilde{g}}$ is the active penalty term with a zero boundary condition $\mathbf{g} = 0$. 

To compare our results with pre-existing numerical tests, we adopt the following definition of the Reynolds number and time scales from \cite{KoumoutsakosLeonard1995}
\begin{eqnarray}
	\textrm{RE} &=& \frac{2 R u_0}{\mu}  \\
		T &=& \frac{u_0}{R} t.
\end{eqnarray}

Using equation (\ref{NS_MovingFrame}), we then solve for the velocity field in time, and compute the drag force and lift for the impulsively started cylinder.  To compute the force we numerically evaluate the momentum transfer to the fluid 
\begin{eqnarray}
	\mathbf{F}_b &=& -\frac{d}{dt} \int_{\Omega_f} \mathbf{u} \du V \\
				&=& -\frac{d}{dt} \int_{\Omega} \mathbf{u} \big(1 - \chi_s(\mathbf{x})\big) \du V.
\end{eqnarray}
The lift ($C_L$) and drag ($C_D$) coefficients are then evaluated as the non-dimensionalized components of the force
\begin{eqnarray}
	C_D = \frac{\mathbf{F}_b \cdot \mathbf{\hat{e}}_x}{R u_0^2} \qquad 	C_L = \frac{\mathbf{F}_b \cdot \mathbf{\hat{e}}_y}{R u_0^2}.
\end{eqnarray}

In our numerical tests, we examine the impulsively started cylinder for RE $ = 40$ and RE $ = 550$. In both cases, we use $R = 1$, $u_0 = 10$ and the appropriate values of $\mu$ to obtain RE. Here figures \ref{DragRE40} and \ref{DragRE550} show the drag versus time for an impulsively started cylinder with RE $ =40$ and RE $ =550$ respectively.  Note that qualitatively the curves match the benchmark results from \cite{KoumoutsakosLeonard1995}.  In particular, for the RE $ =40$, the drag coefficient monotonically decays to a value slightly below $2$. Meanwhile, for RE $ =550$, the drag first drops, followed by a peak at $T = 3.05$.  Here figure \ref{Snapshots} shows the early development of vorticity for the impulsive cylinder.
We also extend the computation for a much longer time to verify the onset of vortex shedding. Here figure \ref{LiftRE550} shows the oscillations in the lift coefficient versus time, while \ref{vonKarmanStreet} shows the vorticity at various times in the evolution.  We note that due to the periodicity of the domain, the simulation effectively models an array of cylinders, as opposed to the conventional von K\'{a}rm\'{a}n street which arises from flow past one cylinder.

\begin{figure} %
	\includegraphics[width=.9\textwidth]{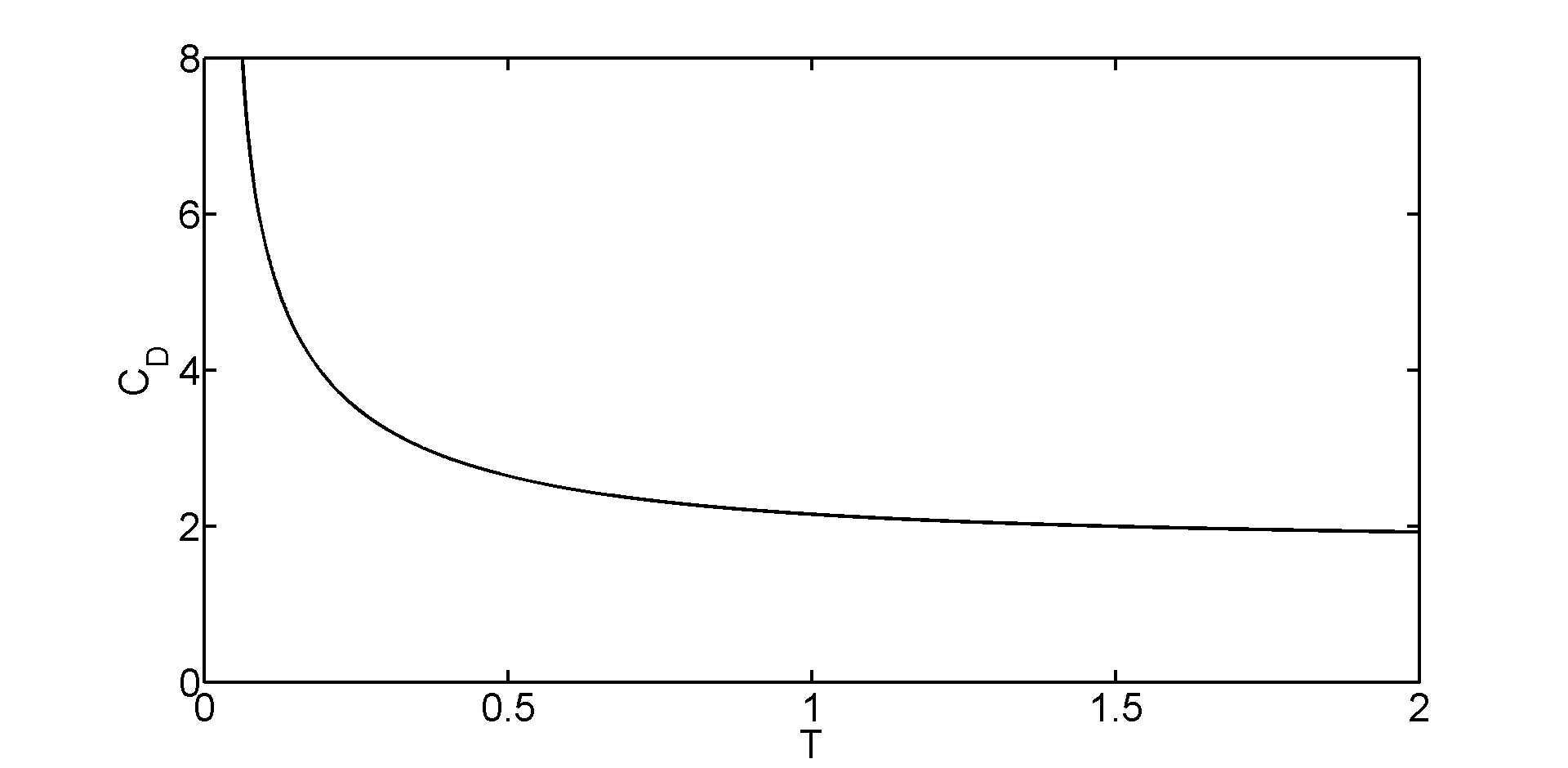} 
\caption{Drag versus time for RE $ = 40$. Here $\eta = 2\times 10^{-4}$, $N = 512$, $l = 0.45$.} \label{DragRE40}
\end{figure}
\begin{figure} %
 \includegraphics[width=.9\textwidth]{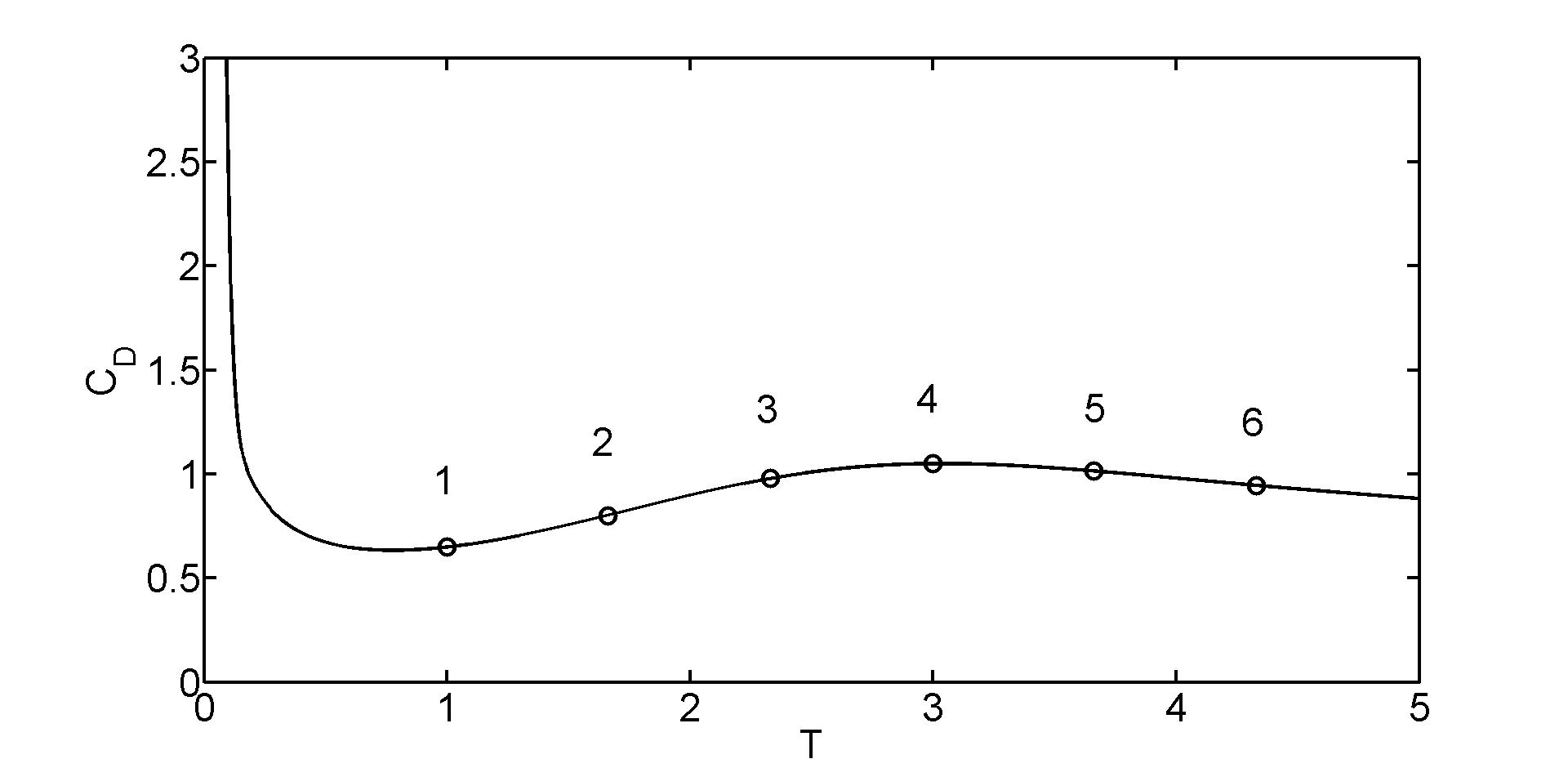} 
\caption{Drag versus time for RE $ = 550$. Here $\eta = 10^{-3}$, $N = 768$, $l = 0.05$. Circles correspond to snapshots of the vorticity shown in figure \ref{Snapshots}.} \label{DragRE550}
\end{figure}

\begin{figure} %
 {{\includegraphics[width=4cm]{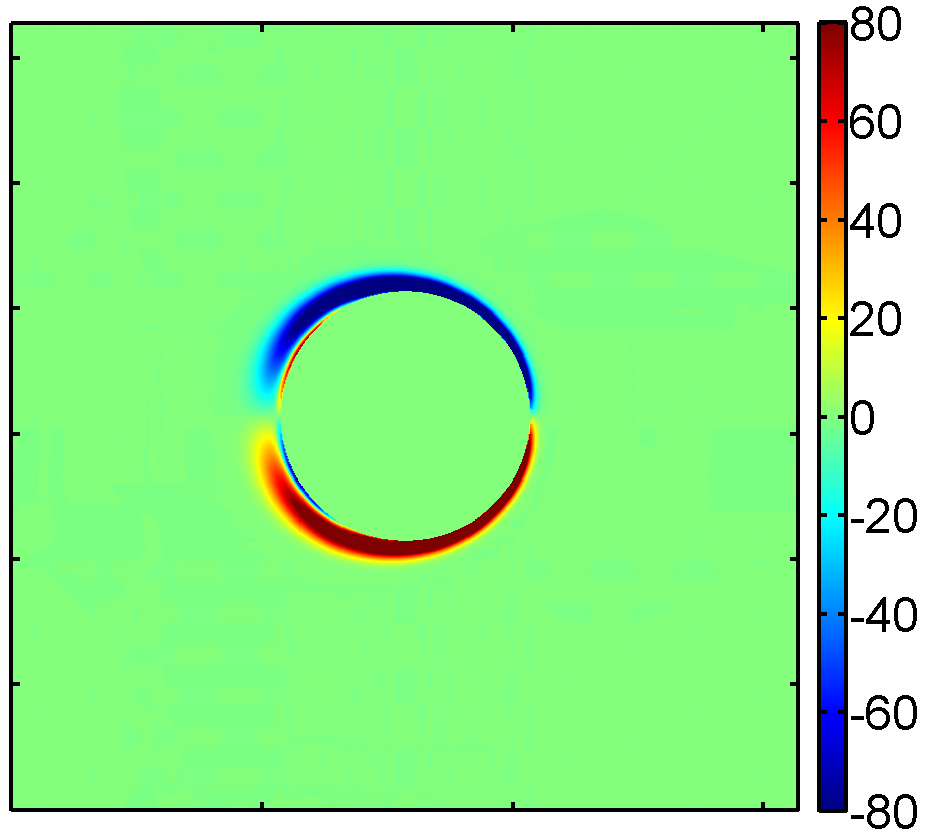}}} 
 {{\includegraphics[width=4cm]{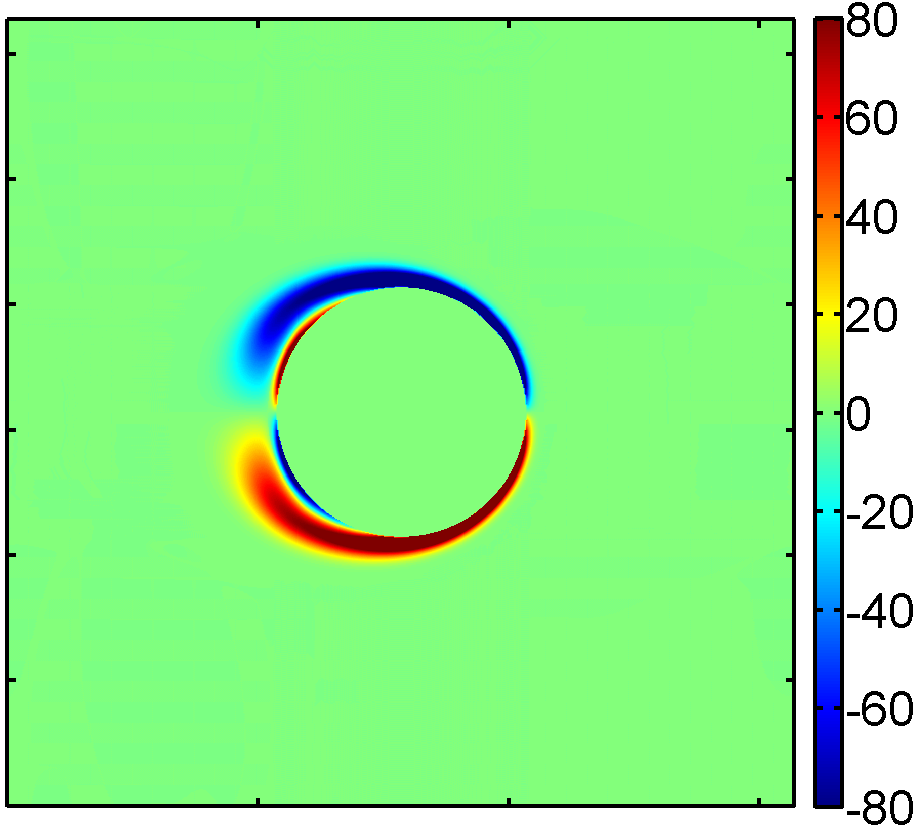}}} 
 {{\includegraphics[width=4cm]{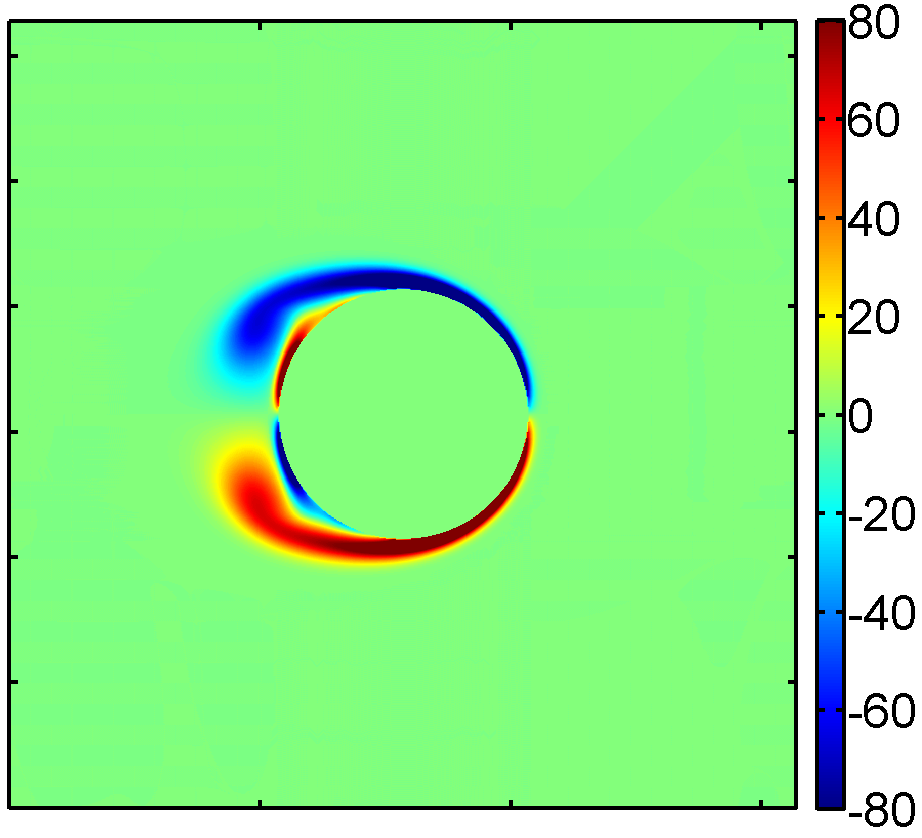}}} \\ 
 {{\includegraphics[width=4cm]{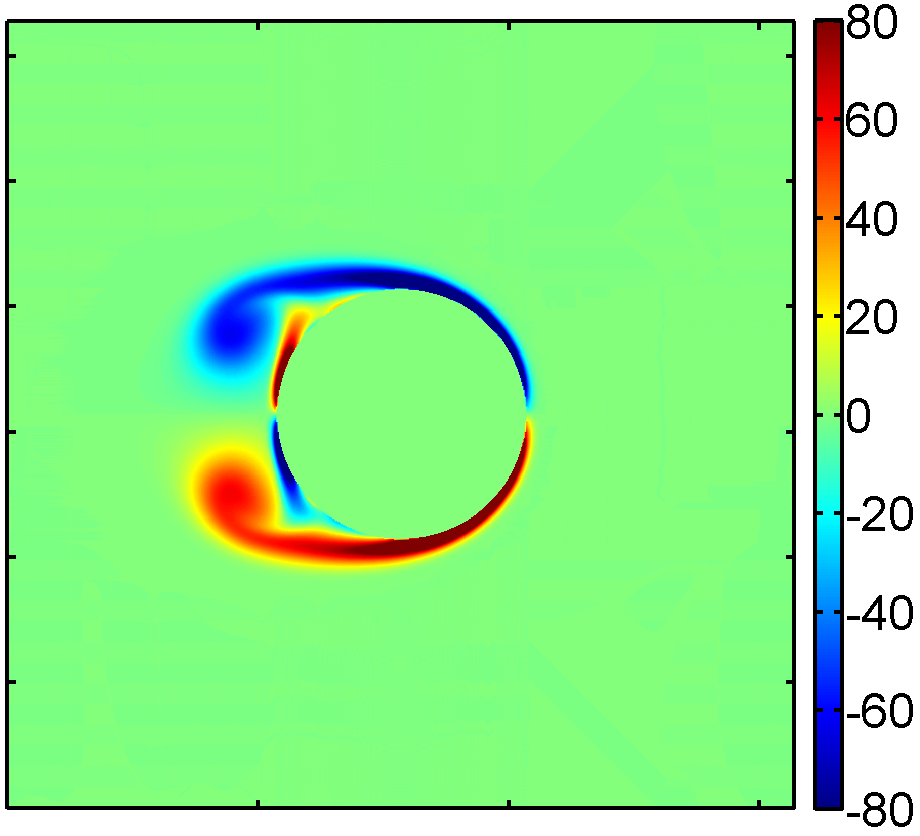}}} 
 {{\includegraphics[width=4cm]{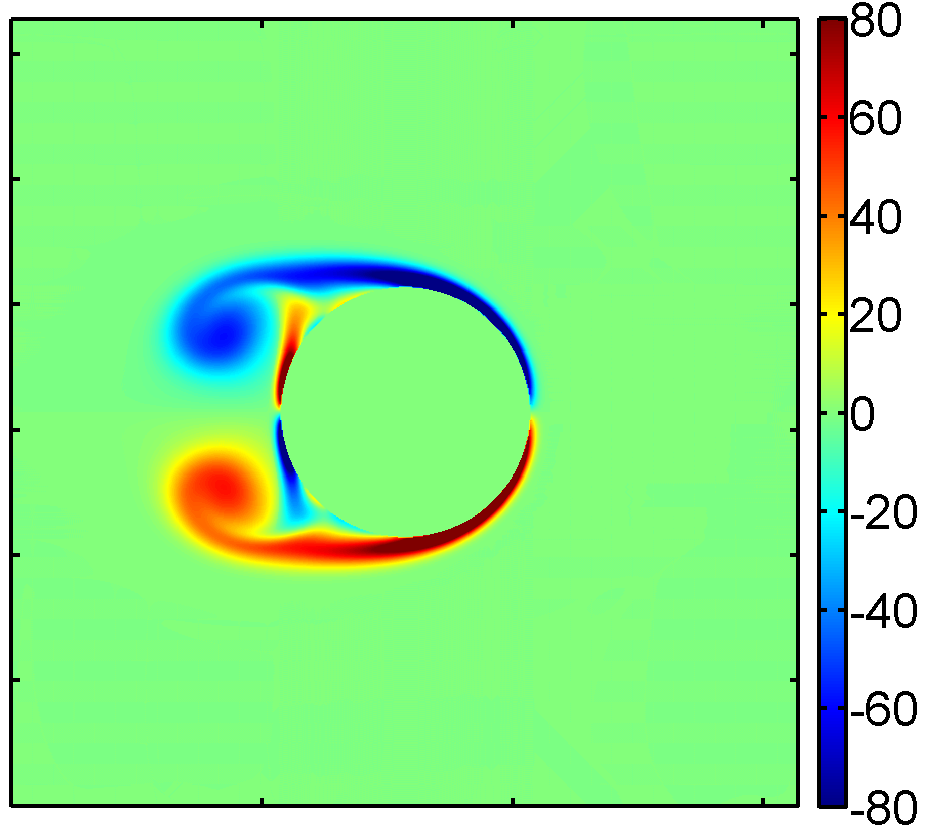}}} 
 {{\includegraphics[width=4cm]{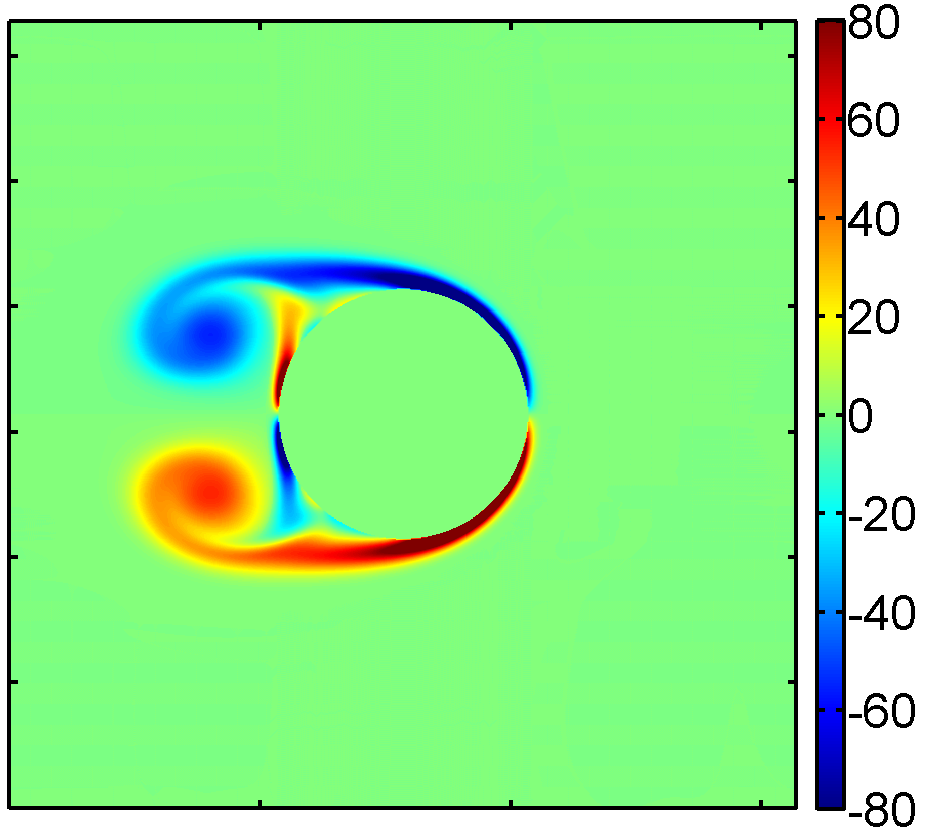}}} 
\caption{Snapshots of the vorticity for an impulsively started cylinder with RE $ = 550$. Images are taken at times (l-r) $T = 1, 1.66, 2.33, 3, 3.66, 4.33$ and correspond to the circles in figure \ref{DragRE550}.}\label{Snapshots}
\end{figure}

\begin{figure} %
 \includegraphics[width=.9\textwidth]{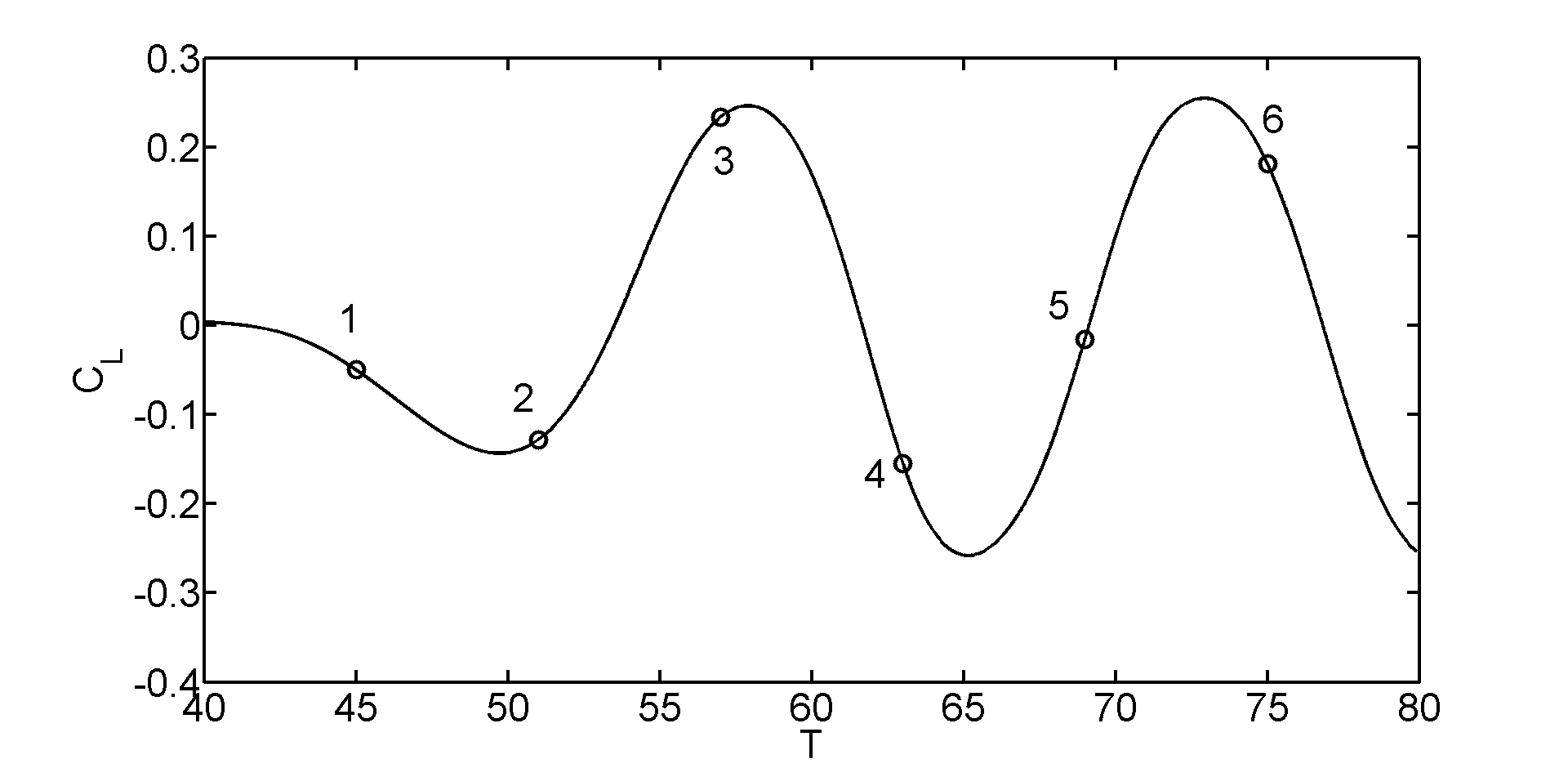} 
\caption{Lift versus time for the onset of the von K\'{a}rm\'{a}m street at RE $= 550$. The oscillations correspond to vortex shedding. Circles correspond to snapshots of the vorticity shown in figure \ref{vonKarmanStreet}.} \label{LiftRE550}
\end{figure}

\begin{figure} %
 {{\includegraphics[width=4cm]{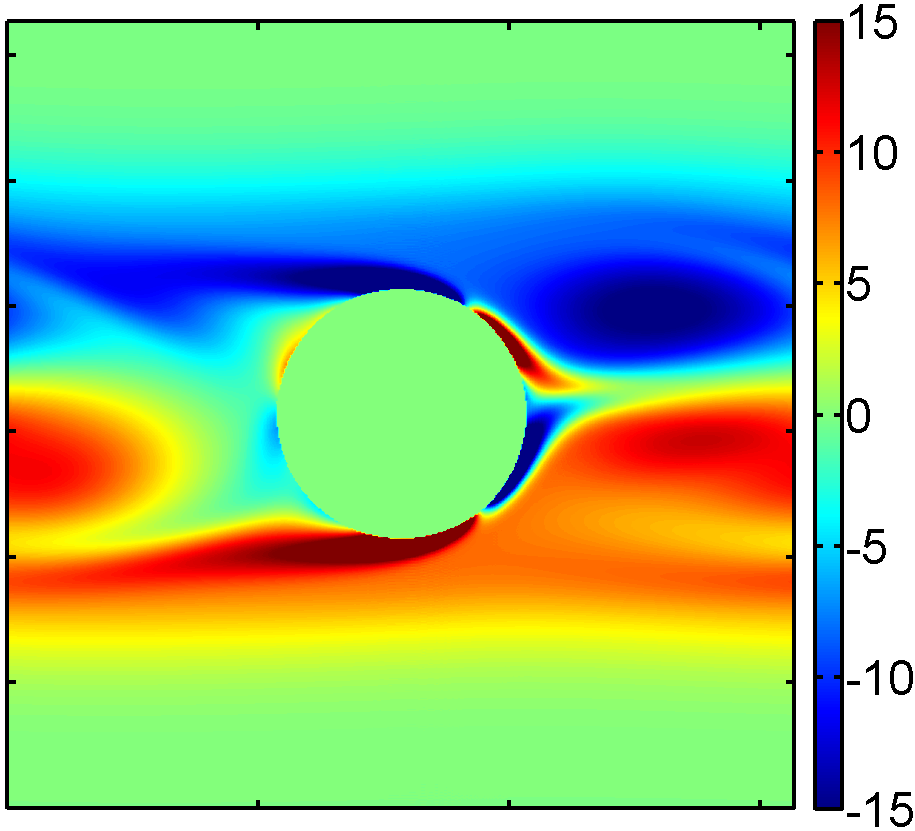}}} 
 {{\includegraphics[width=4cm]{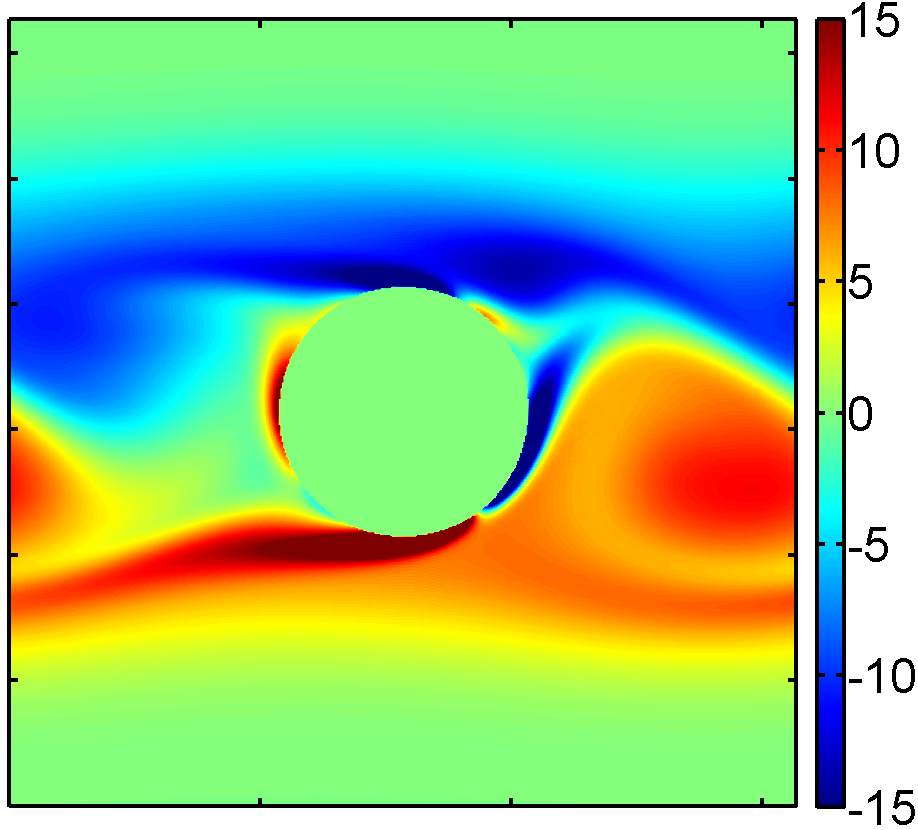}}} 
 {{\includegraphics[width=4cm]{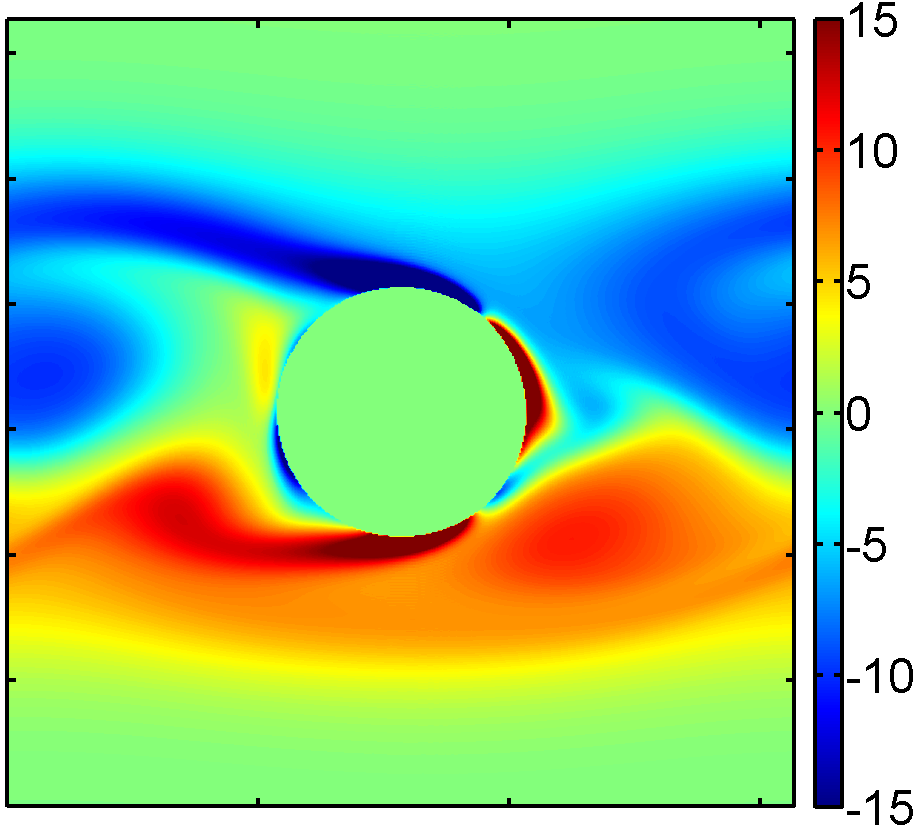}}} \\ 
 {{\includegraphics[width=4cm]{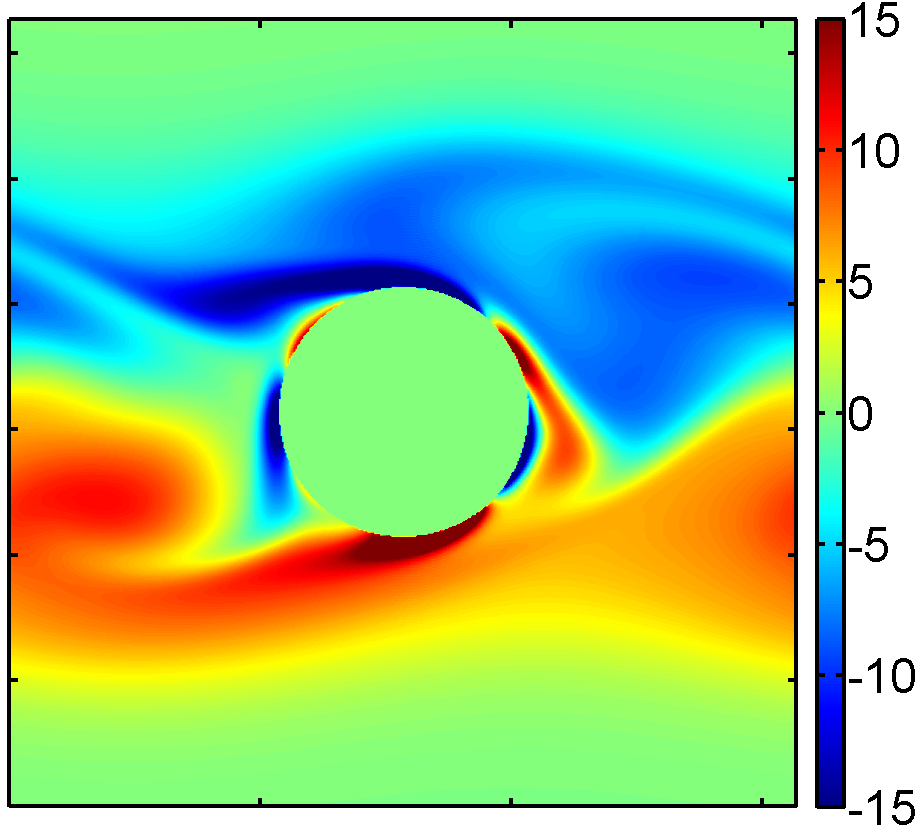}}} 
 {{\includegraphics[width=4cm]{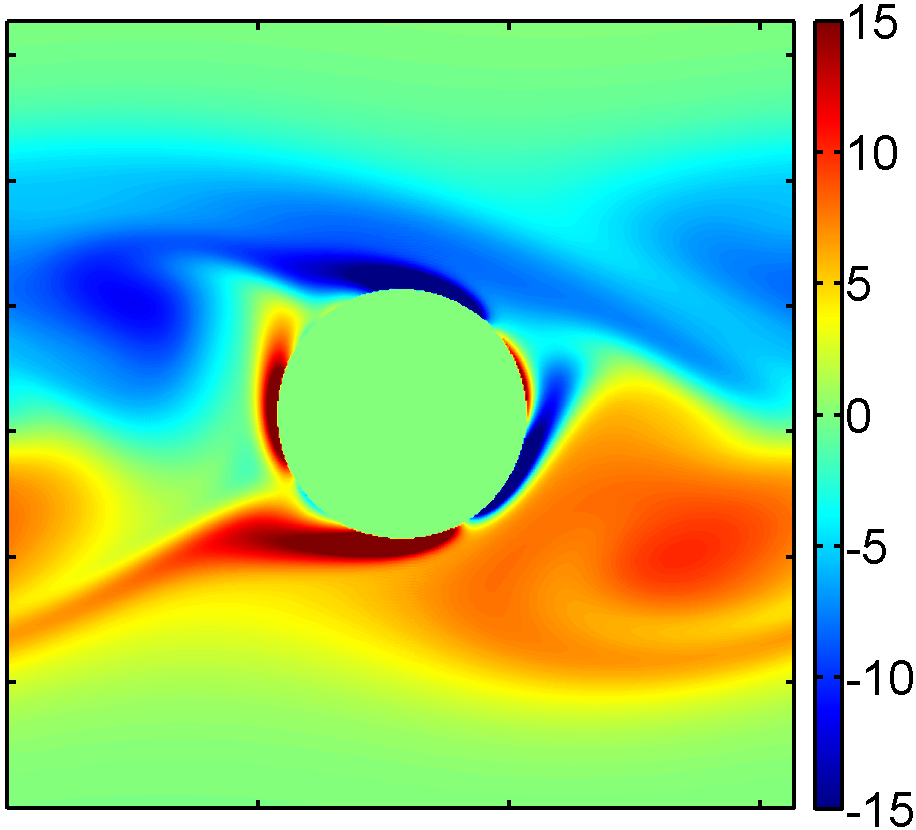}}} 
 {{\includegraphics[width=4cm]{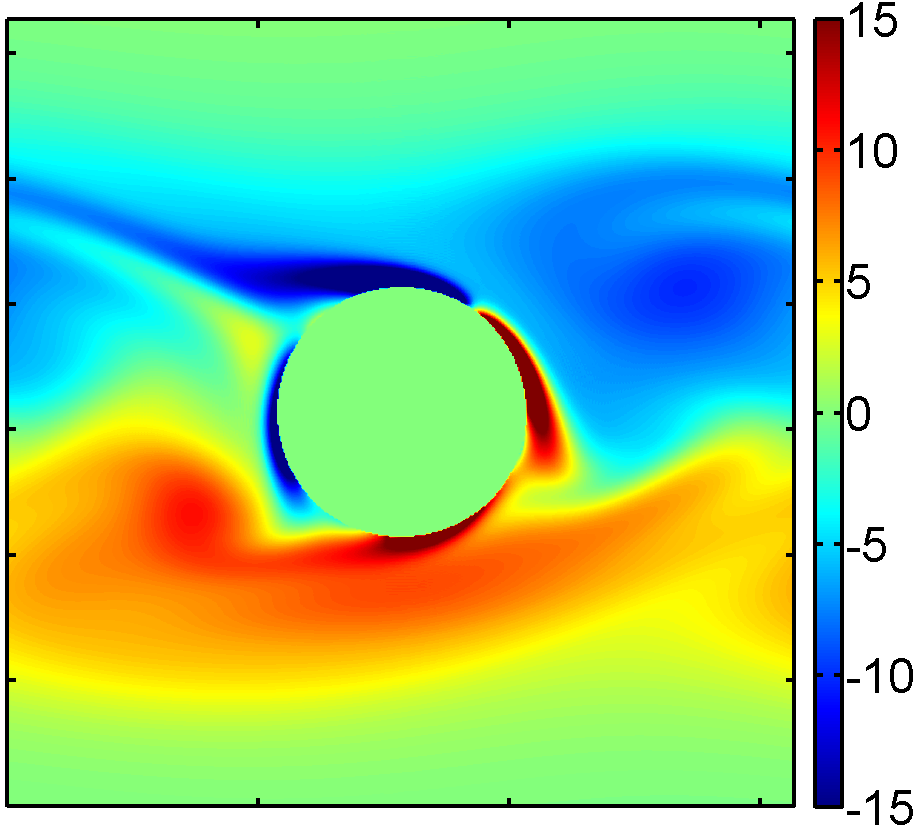}}} 
\caption{Snapshots of the vorticity for an impulsively started cylinder showing the onset of full vortex shedding and von K\'{a}rm\'{a}n type street \cite{vonKarman1963}. Images are for RE $= 550$, and taken at times $T = 45, 51, 57, 63, 69, 75$ corresponding to the circles in figure \ref{LiftRE550}.} \label{vonKarmanStreet}
\end{figure}


\section{Conclusion}

In this paper, we outline how to construct high order penalty methods.  We do so by first introducing an active penalty term for the heat equation.  When we increase the number of matched derivatives, we show that the penalty term improves the analytic convergence rate in terms of the penalty parameter.  Secondly, we examine the numerical stability of the active penalty term.  We show that it does not introduce additional stiffness into the equations or additional length scales that would need to be resolved.  The combination of the high order convergence in the penalty parameter along with the numerical stability then leads to higher order numerical schemes.  Lastly, we extend the penalized term from the heat equation to the incompressible Navier-Stokes equations.  In particular, we show how to handle the divergence constraint on the velocity field. We also conclude with an application of flow around an impulsively started cylinder for RE $=40$ and RE $=550$.  In the case of RE$=550$, we demonstrate the onset of a von K\'{a}rm\'{a}n street.

Although we have outlined a high order approach, there are still remaining issues that limit the practical feasibility of the method.  For instance, at no point do we improve the smoothness of the solution $\mathbf{u}_{\eta}$. In fact the second derivatives of $\mathbf{u}_{\eta}$ remain discontinuous across the curve $\Gamma$, although matching more derivatives in the active penalty term may reduce the size of the discontinuity.  As a result, Fourier methods still have a slow decay in the Fourier modes thereby limiting the ability to spectrally compute derivatives.  In addition, interpolation of high order derivatives in the construction of $\mathbf{\tilde{g}}$ should be one-sided (i.e. from $\Omega_p$) while in practice one would prefer to use points on both sides of $\Gamma$.  As a result, ongoing research includes improving the global smoothness of $\mathbf{u}_{\eta}$ while retaining the high order convergence.

\section{Acknowledgments}
The authors would like to thank Kirill Shmakov and Genevi\`eve Bourgeois for additional preliminary computations not currently presented.  The authors have also greatly benefited from conversations with Dmitry Kolomenskiy, Kai Schneider, Ruben Rosales and Tsogtgerel Gantumur.  This work was supported by an
NSERC Discovery Grant and the NSERC DAS.

\bibliography{main}
\bibliographystyle{plain}
\end{document}